\theoremstyle{plain}
\newtheorem{theorem}{Theorem}[section]
\newtheorem{lemma}[theorem]{Lemma}
\newtheorem{corollary}[theorem]{Corollary}
\theoremstyle{definition}
\theoremstyle{remark}
\renewcommand{\thefootnote}{\arabic{footnote}}
\def\R{\mathbb R}
\def\i0i{\int_0^\infty}
\numberwithin{equation}{section}
\title{Orlicz--Lorentz centroid bodies}
\author{Van Hoang Nguyen\footnote{
Institut de Math\'ematiques de Toulouse, Universit\'e Paul Sabatier, 118 Route de Narbonne, 31062 Toulouse c\'edex 09, France.}
}
\begin{document}
\maketitle

\renewcommand{\thefootnote}{}

\footnote{Email: \href{mailto: Van Hoang Nguyen <van-hoang.nguyen@math.univ-toulouse.fr>}{van-hoang.nguyen@math.univ-toulouse.fr}}

\footnote{2010 \emph{Mathematics Subject Classification\text}: 52A40.}

\footnote{\emph{Key words and phrases\text}:Orlicz--Lorentz centroid body, Orlicz--Lorentz Busemann--Petty centroid inequality, decreasing rearrangement function, Steiner's symmetrization.}

\renewcommand{\thefootnote}{\arabic{footnote}}
\setcounter{footnote}{0}

\begin{abstract}
We extend the definition of the centroid body operator to an Orlicz--Lorentz centroid body operator on the star bodies in $\R^n$, and establish the sharp affine isoperimetric inequality that bounds (from below) the volume of the Orlicz--Lorentz centroid body of any convex body containing the origin in its interior by the volume of this convex body.
\end{abstract}

\section{Introduction}
The concepts of centroid body and projection body are the central notions in convex geometry (or, in Brunn--Minkowski theory). The classical affine isoperimetric inequalities that relate the volume of a convex body with that of its centroid body or its projection body were established in a landmark works of Petty \cite{Petty} and nowaday are known as the Busemann–-Petty centroid inequality and Busemann--Petty projection inequality. (See, e.g., the books of Gardner \cite{Gardner}, Schneider \cite{Schneider}, and Thompson \cite{Thompson} for references.)

The Brunn--Minkowski theory has a natural extension to the $L_p$ Brunn--Minkowski theory and its dual. This new theory was initiated in the early $1960$s when Firey introduced his concept of $L_p$ composition of convex bodies (see, e.g., the book of Schneider \cite{Schneider}). These Firey--Minkowski $L_p$ combinations were shown to lead to an embryonic $L_p$ Brunn--Minkowski theory in the works of Lutwak \cite{Lutwak93,Lutwak96}.  This new theory (and its dual) has witnessed a rapid growth. Its central concepts (and its dual) are the $L_p$ centroid body and $L_p$ projection body (an $L_p$ analogue of the centroid body and projection body) which was introduced by Lutwak, Yang and Zhang \cite{LYZ00}. The $L_p$ analogues of the Busemann--Petty centroid inequality and Busemann--Petty projection inequality were also established in \cite{LYZ00} by using Steiner's symmetrization method, and nowaday are named as the $L_p$ Busemann--Petty centroid inequality and $L_p$ Busemann--Petty projection inequality or shortly $L_p$ affine isoperimetric inequalities (see \cite{CG02,Cordero,Paouris12} for the other proofs of these inequalities based on the shadow system which was introduced by Rogers and Shephard \cite{Rogers,Shephard} or the radom method). It was shown in \cite{Cianchi,LYZ02} that $L_p$ affine isoperimetric inequalities are crucial tools to establish the sharp affine $L_p$ Sobolev inequalities and the affine P\'olya--Szeg\"o principle which are stronger than the usual sharp Sobolev inequalities and the usual P\'oly--Szeg\"o principle in Euclidean space (see \cite{Haberl09,HaSch09,Haberl12} for the strengthened asymmetric counterparts of these results). The $L_p$-centroid bodies recently found some important applications in the field of asymptotic geometric analysis (see, e.g., \cite{Dafnis,Fleury,Gian,Guedon,Klartag,Paouris05,Paouris06a,Paouris06b,Paouris12a} and references therein, especially,in establishing the concentration of mass on convex bodies of Paouris \cite{Paouris06a,Paouris06b} and in thin--shell estimates of Gu\'edon and Milman) and even in the theory of stable distributions \cite{Mol}.

Recently, Lutwak, Yang and Zhang extended the $L_p$ Brunn--Minkowski theory to an Orlicz--Brunn--Minkowski theory by introducing the concepts of Orlicz centroid body (see \cite{LYZcentroid}) and the Orlicz projection body (see \cite{LYZprojection}) for any convex body. They also established the affine isoperimetric inequalities revealing the volume of a convex body with the volume of its Orlicz centroid body and the volume of its Orlicz projection body which are called the Orlicz Busemann--Petty centroid inequality and Orlicz Busemann--Petty projection inequality, repectively (see \cite{Cordero,Paouris12,Li} for the other proofs of these affine isoperimetric inequalities, and see also \cite{Zhu14} for the Orlicz Busemann--Petty centroid inequality on the star bodies). The reverse Orlicz Busemann--Petty centroid inequality was proved in \cite{Chen}. Since the works of Lutwak, Yang and Zhang, the Orlicz--Brunn--Minkowski theory were developed very fast by many authors (see, e.g., \cite{Chen,Gardner14,Gardner15,Haberl12,Huang,Kone,Li16,Xi14,Ye16,Zhu14,Zou14,Zou16} and references therein). For example, in \cite{Gardner14}, Gardner, Hug and Weil developed a general framework for this new theory by introducing the definition of Orlicz addtion. They show that Orlicz addition is intimately related to a natural and fundamental generalization of Minkowski addition called $M$--addition. They also proved some inequalities of Brunn--Minkowski type (such as the Orlicz--Brunn--Minkowski inequality and Orlicz--Minkowski inequality) for both Orlicz addition and $M$-addition. These new inequalities are generalizations of the ones in the $L_p-$Brunn--Minkowski theory, and have a connection with the conjectured log--Brunn--Minkowski inequality of B\"or\"oczky, Lutwak, Yang and Zhang \cite{BLYZlog}. Another proof of the Orlicz--Brunn--Minkowski inequality using Steiner symmetrization method can be found in \cite{Xi14}. In \cite{Haberl12}, Haberl, Lutwak, Yang and Zhang posed the Orlicz--Minkowski problem asking the necessary and sufficient conditions of a given Borel measure on sphere for which this measure is the Orlicz surface area of a convex body. This problem was solved in \cite{Haberl12} when the given measure is even. For the discrete measure, this problem was solved in \cite{Huang}. The dual Orlicz--Brunn--Minkowski theory was recently developed in \cite{Gardner15,Ye16,Zhu14}.

In this paper, we extend the definition of Orlicz centroid body of Lutwak,Yang and Zhang to a more general situation of the Orlicz--Lorentz spaces which are generalization of both Orlicz spaces introduced by Orlicz \cite{O} (see also \cite{Lux}) and Lorentz spaces introduced by Lorentz (see \cite{Lo1,Lo2}). To do this, let us recall some basic elements of these spaces. Let $(\Omega,\Sigma,\mu)$ be a measure space with an $\sigma-$finite, non atom measure $\mu$. For any measurable function $f:\Omega\to \R$, we define the distribution function of $f$ by 
\[
\mu_f(t)=\mu(\{x: |f(x)|>t\}),\qquad \forall\,t>0,
\]
and the decreasing rearrangement of $f$ by
\[
f^{*}(t)=\inf\{\lambda>0: \mu_f(\lambda)\leq t\},
\]
for any $t>0$ (for convention $\inf \emptyset =\infty$).

We denote $I=(0,\mu(\Omega))$. A function $\phi:[0,\infty)\to [0,\infty)$ is called an Orlicz function if $\phi$ is a convex function such that $\phi(t)>0$ if $t>0$, $\phi(0)=0$ and $\lim\limits_{t\to\infty}\phi(t)=\infty$. A function $\omega: I\to (0,\infty)$ is called a weight function if $\omega$ is nonincreasing function which is locally integrable with respect to the Lebesgue measure on $I$ such that $\int_I\omega(t)dt=\infty$ if $I=(0,\infty)$. For an Orlicz function $\phi$ and a weight function $\omega$, we define the Orlicz-Lorentz space $\Lambda_{\phi,\omega}$ on $(\Omega,\Sigma,\mu)$ to be the set of all measurable functions $f$ on $\Omega$ such that
\[
\int_I\phi\left(\frac{f^{*}(t)}{\lambda}\right)\omega(t)dt < \infty,
\]
for some $\lambda > 0$. If the function $f\in \Lambda_{\phi,\omega}$, its Orlicz norm is defined by
\begin{equation}\label{O-LN}
\|f\|_{\Lambda_{\phi,\omega}} =\inf\left\{\lambda > 0: \int_I\phi\left(\frac{f^{*}(t)}{\lambda}\right)\omega(t)dt\leq 1\right\}.
\end{equation}
It is obvious from this definition that if $f$ and $g$ have the same distribution function then $\|f\|_{\Lambda_{\phi,\omega}}=\|g\|_{\Lambda_{\phi,\omega}}$. When $\omega\equiv 1$, the Orlicz--Lorentz space $\Lambda_{\phi,\omega}$ is the Orlicz space. Especially, when $\phi(t)=t^p$ and $\omega\equiv 1$, we obtain the Lebesgue space $L_p(\Omega,\mu)$. When $\phi(t) =t$, we obtain the Lorentz space $\Lambda_\omega$.

Let $K$ be a star body (see section \S2 for precise definition) with respect to the origin in $\mathbb R^n$ with volume $|K|$. We consider the measure space $(\Omega,\Sigma,\mu) = (K, \mathcal B_K, \mu^K)$ here and thereafter $\mathcal B_A$ denotes $\sigma-$algebra of all Lebesgue measurable subset of $A$, and $\mu^A$ denotes the normalized measure on $A$ whose density is $1_A(x)dx/| A|$ for any Lebesgue measurable $A\subset \mathbb R^n$ of positive measure. For any vector $x\in \mathbb R^n$, we define the function $f_{x,K}$ on $K$ by $f_{x,K}(y) = x\cdot y$, with $y\in K$ where $x\cdot y$ denotes the standard inner product of vectors $x$ and $y$ in $\mathbb R^n$. Given an Orlicz function $\phi$ and a weight function $\omega$ on $I =(0,1)$, we define the Orlicz--Lorentz centroid body of $K$ denoted by $\Gamma_{\phi,\omega}K$ to be the convex body in $\mathbb R^n$ whose support function is given by
\[
h(\Gamma_{\phi,\omega}K,x) = \|f_{x,K}\|_{\Lambda_{\phi,\omega}} = \inf\left\{\lambda > 0: \int_0^1\phi\left(\frac{f_{x,K}^{*}(t)}{\lambda}\right)\omega(t)dt\leq 1\right\}.
\]
When $\omega\equiv 1$, our definition of Orlicz--Lorentz centroid body coincides with the definition of Orlicz centroid body given by Lutwak, Yang and Zhang \cite{LYZcentroid} for even convex function $\phi$ in $\mathbb R$. Note that Lutwak, Yang and Zhang  defined the Orlicz centroid body for any function convex function $\phi :\mathbb R\to (0,\infty)$ such that $\phi$ is nonincreasing on $(-\infty,0]$, $\phi$ is nondecreasing on $[0,\infty)$ and one of these monotonicity is strict. Their definition is more general than ours in this case. However, when $\phi(t) =t^p$ and $\omega =1$, we again obtain the defintion of the $L_p$ centroid body given in \cite{LYZ00}.

We will establish the following affine isoperimetric inequality for Orlicz--Lorentz centroid bodies.

\begin{theorem}\label{maintheorem}
If $\phi$ is an Orlicz function, $\omega$ is a weight function on $(0,1)$ and $K$ is a convex body in $\mathbb R^n$ containing the origin in its interior, then the volume ratio 
\[
\frac{|\Gamma_{\phi,\omega}K|}{|K|}
\]
is minimized if and only if $K$ is an origin--centered ellipsoid.
\end{theorem}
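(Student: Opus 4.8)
The plan is to follow the Steiner symmetrization strategy pioneered by Lutwak, Yang and Zhang in \cite{LYZ00} for the $L_p$ case and adapted to the Orlicz setting in \cite{LYZcentroid}. First I would reduce the problem to showing that Steiner symmetrization does not increase the volume ratio: if $S_uK$ denotes the Steiner symmetral of $K$ in the direction of a unit vector $u$, the key claim is
\[
\Gamma_{\phi,\omega}(S_uK) \subseteq S_u(\Gamma_{\phi,\omega}K),
\]
which immediately gives $|\Gamma_{\phi,\omega}(S_uK)| \le |S_u(\Gamma_{\phi,\omega}K)| = |\Gamma_{\phi,\omega}K|$ while $|S_uK| = |K|$. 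Iterating Steiner symmetrizations in a suitable sequence of directions produces a sequence of bodies converging (in the Hausdorff metric) to a ball; continuity of $K \mapsto |\Gamma_{\phi,\omega}K|$ and of the volume functional then yields that the ball minimizes the ratio, and composing with the affine invariance $\Gamma_{\phi,\omega}(TK) = T(\Gamma_{\phi,\omega}K)$ for $T \in SL(n)$ (which follows from the change of variables $y \mapsto T^{-1}y$ in the defining integral, since this preserves the normalized measure and sends $f_{x,TK}$ to $f_{T^tx,K}$) extends minimality to all origin-centered ellipsoids.

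The heart of the matter is establishing the inclusion $\Gamma_{\phi,\omega}(S_uK) \subseteq S_u(\Gamma_{\phi,\omega}K)$. I would set up coordinates so that $u = e_n$, write points as $(z,s)$ with $z \in \R^{n-1}$, and describe $K$ via its "upper" and "lower" graphing functions $g, h$ over the projection $K|u^\perp$, so that $S_uK$ is the region between $\pm\frac{1}{2}(g-h)$. For a fixed direction $x = (\bar x, \tau)$, one must compare the Orlicz--Lorentz norm of $f_{x, S_uK}$ with the norms of $f_{x',K}$ and $f_{x'',K}$ for the two reflected vectors $x' = (\bar x, \tau)$ and $x'' = (\bar x, -\tau)$, and show that the support function at $x$ of $S_u(\Gamma_{\phi,\omega}K)$, namely $\frac12(h(\Gamma_{\phi,\omega}K, x') + h(\Gamma_{\phi,\omega}K, x''))$ up to the correct bookkeeping, dominates $h(\Gamma_{\phi,\omega}(S_uK), x)$. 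The crucial analytic input is that the Orlicz--Lorentz norm depends on a function only through its distribution function, so everything can be recast on $I = (0,1)$ in terms of rearrangements; combined with the convexity of $\phi$ one then argues, slicing $K$ by lines parallel to $u$ and using a Fubini-type decomposition of the defining integral together with the fact that on each slice the linear function is replaced by its symmetric version, that the relevant integral inequality holds.

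The main obstacle I anticipate is precisely the interaction between decreasing rearrangement and Steiner symmetrization: unlike in the pure Orlicz case (where the integral $\int_K \phi(f_{x,K}(y)/\lambda)\,d\mu^K(y)$ decouples cleanly over slices by Fubini), here the norm \eqref{O-LN} involves $f^*_{x,K}$ and the weight $\omega(t)$, so the functional is \emph{not} an integral over $K$ of a pointwise function of $f_{x,K}$. One needs a rearrangement inequality showing that the distribution function of $f_{x,S_uK}$ is controlled, in the appropriate sense (e.g. via a Hardy--Littlewood or majorization argument against the weight $\omega$), by those of $f_{x',K}$ and $f_{x'',K}$; establishing the right monotonicity/convexity statement at the level of distribution functions — and verifying that the infimum defining the norm then behaves correctly — is the delicate step. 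Once that rearrangement lemma is in place, the convexity of $\phi$, the definition of the Minkowski average of support functions, and a standard limiting argument close the proof, with the equality case following from the equality analysis in the Steiner symmetrization step (the ratio is constant along the symmetrization sequence only when $K$ was already an ellipsoid).
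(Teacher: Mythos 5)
Your overall strategy is the paper's strategy (Steiner symmetrization, the inclusion $\Gamma_{\phi,\omega}(S_uK)\subseteq S_u(\Gamma_{\phi,\omega}K)$, $GL(n)$-covariance, continuity of $K\mapsto \Gamma_{\phi,\omega}K$, convergence of iterated symmetrals to a ball, equality analysis), but the proposal has a genuine gap exactly where the theorem lives: the rearrangement step that you yourself flag as ``the delicate step'' is never carried out, and it is not a routine detail but the main new content compared with the Orlicz case. The paper does \emph{not} need any slice-by-slice control of the distribution function of $f_{x,S_uK}$ by those of $f_{x_1,K}$ and $f_{x_2,K}$. Instead it uses the two volume-preserving maps $S:K\to S_uK$, $S(y'+(m(y')+t)u)=y'+tu$, and $T:K\to K$, $T(y'+(m(y')+t)u)=y'+(m(y')-t)u$, to get the \emph{exact pointwise identity} $f_{x,S_uK}\circ S=\tfrac12 f_{x_1,K}+\tfrac12 f_{x_2,K}\circ T$ on $K$, where $x_1=x_1'+u$, $x_2=x_2'-u$, $x=\tfrac12 x_1'+\tfrac12 x_2'+u$. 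Then, with $\lambda_i=h(\Gamma_{\phi,\omega}K,x_i)$ and $\lambda=\tfrac12(\lambda_1+\lambda_2)$, one combines: equimeasurability under $S$ and $T$; the identity $\phi(g^*)=(\phi(|g|))^*$ for increasing $\phi$; the order-preservation of $g\mapsto g^*$; convexity of $\phi$ applied with weights $\lambda_i/(\lambda_1+\lambda_2)$; and, crucially, the inequality $\int_0^1 (g_1+g_2)^*(t)\omega(t)\,dt\le \int_0^1 g_1^*(t)\omega(t)\,dt+\int_0^1 g_2^*(t)\omega(t)\,dt$, which is valid precisely because the weight $\omega$ is nonincreasing (this is where the hypothesis on $\omega$ enters, a point your sketch never isolates). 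This yields $\int_0^1\phi\bigl(f_{x,S_uK}^*(t)/\lambda\bigr)\omega(t)\,dt\le 1$ and hence the support-function inequality; your proposed route via majorization of distribution functions over slices is exactly the approach that fails here, as you note, and no substitute is supplied.

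Two further points are glossed over by ``up to the correct bookkeeping.'' First, the comparison must be proved with two \emph{independent} vectors $x_1',x_2'\in u^\perp$, i.e. $h\bigl(\Gamma_{\phi,\omega}(S_uK),\tfrac12 x_1'+\tfrac12 x_2'\pm u\bigr)\le \tfrac12 h(\Gamma_{\phi,\omega}K,x_1'+u)+\tfrac12 h(\Gamma_{\phi,\omega}K,x_2'-u)$, not merely with a vector and its reflection: only the two-parameter version, combined with the overgraph/undergraph minimax formulas $g_u(y')=\min_{x'\in u^\perp} h(K,x'+u)-x'\cdot y'$ (Lemma \ref{graph}), bounds both graphing functions of $\Gamma_{\phi,\omega}(S_uK)$ and yields the inclusion. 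Second, the equality case needs real work: equality in the Steiner inequality only forces the chords of $K$ parallel to $u$ whose distance to the origin is less than $r_K/(2\max\{1,|x_1'|,|x_2'|\})$ to have midpoints in a hyperplane, so one needs the uniform bound $|x_1'|,|x_2'|\le 2R_{\Gamma_{\phi,\omega}K}/r_{\Gamma_{\phi,\omega}K}$ (Lemma \ref{boundonKu}) and then the Gruber--Ludwig characterization of ellipsoids by chords near the origin (Lemma \ref{ellipsoid}); the assertion that ``the ratio is constant along the symmetrization sequence only when $K$ was already an ellipsoid'' is not an argument, and the actual scheme is: a minimizer $K$ satisfies $\Gamma_{\phi,\omega}(S_uK)=S_u(\Gamma_{\phi,\omega}K)$ for every $u$, whence $K$ is an origin-centered ellipsoid. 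Finally, the continuity of $K\mapsto\Gamma_{\phi,\omega}K$ used in the limiting step itself requires a priori two-sided bounds on $h(\Gamma_{\phi,\omega}K,\cdot)$ and an a.e.\ convergence argument for $f^*_{u,K_i}$, which your outline assumes without comment.
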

This theorem contains as a special case the classical Busemann--Petty centroid inequality for convex bodies \cite{Petty}, as well as the $L_p$ Busemann--Petty centroid inequality for convex bodies (even for star bodies with respect to the origin) that established in \cite{LYZ00}, and the Orlicz Busemann--Petty centroid inequality for convex bodies that established in \cite{LYZcentroid} for even convex function $\phi$. Our proof of Theorem \ref{maintheorem} use the traditional approach to establish the $L_p$ Busemann--Petty centroid inequality \cite{LYZ00} and the Orlicz--Busemann--Petty centroid inequality \cite{LYZcentroid} by using Steiner's symmetrization (see Section \S2 for its definition). However, the appearance of the weight function $\omega$ and working with the decreasing rearrangement function make our proof more complicate. We strongly believe that the shadow system approach (see \cite{CG02,Li}) or radom approach \cite{Cordero,Paouris12} would give the another proof of Theorem \ref{maintheorem}.

The rest of this paper is organized as follows. In Section \S1, we list some basic and well-known facts of conves bodies. Some basic properties of the Orlicz--Lorentz centroid body will be given in Section \S3. Section \S4 is devoted to prove Theorem \ref{maintheorem}.

\section{Background material}
Schneider's book \cite{Schneider} is an excellent reference on theory of convex bodies. Our setting will be Euclidean $n$-space $\mathbb R^n$. We write $e_1,e_2,\ldots,e_n$ for the standard orthonormal basis of $\mathbb R^n$ and when we write $\mathbb R^n = \mathbb R^{n-1} \times \mathbb R$, we always assume that $e_n$ is associated with the last factor. We will attempt to use $x, y$ for vectors in $\mathbb R^n$ and $x', y'$ for vectors in $\mathbb R^{n-1}$. We will also attempt to use $a,b,s,t$ for numbers in $\mathbb R$ and $\mathbb c, \lambda$ for strictly positive reals. If $Q$ is a Borel subset of $\mathbb R^n$ and $Q$ is contained in an $i$-dimensional affine subspace of $\mathbb R^n$ but in no affine subspace of lower dimension, then $|Q|$ will denote the $i$-dimensional Lebesgue measure of $Q$. If $x \in \mathbb R^n$ then by abuse of notation we will write $|x|$ for the norm of $x$. For any $r >0$, we denote by $B_r$ the ball centered at the origin of radius $r$. The unit ball $B_1$ will be written by $B$ for simplicity. Its volume is $\omega_n = |B| =\pi^{n/2}/\Gamma(1 +n/2)$. The unit sphere in $\mathbb R^n$ will be denoted by $S^{n-1}$.

For $A \in GL(n)$ (the set of all invertible $n\times n$ matrices), we write $A^t$ for the transpose of $A$ and $A^{-t}$ for the inverse of the transpose (contragradient) of $A$. Write $|A|$ for the absolute value of the determinant of $A$.

Let $\mathcal C$ denote the set of all Orlicz functions $\phi$ on $[0,\infty)$. It is remarkable from its definition that any Orlicz function $\phi\in \mathcal C$ is strict increasing in $[0,\infty)$, and hence its inversion function $\phi^{-1}$ exists and is continuous. We say that the sequence $\{\phi_i\}_i$ of Orlicz functions is such that $\phi_i\to \phi_0 \in \mathcal C$ if 
\[
|\phi_i-\phi_0|_I = \max_{t\in I} |\phi_i(t) -\phi_0(t)| \to 0,
\]
for any compact interval $I \subset [0,\infty)$. 

A subset $K\subset \mathbb R^n$ is a star-shaped about the origin if for any $x\in K$ then the segmet $\{tx\, :\, t\in [0,1]\}$ is contained in $K$. For a star-shaped about the origin $K$, its radial function $\rho_K: \mathbb R^n \setminus \{0\} \to [0,\infty]$ is defined by
\[
\rho_K(x) = \max\{\lambda >0\, : \lambda x \in K\}.
\]
If $\rho_K$ is strict positive and continuous, then we call $K$ a star body. Let $\mathcal S_0^n$ denote the set of all star bodies with respect to the origin in $\mathbb R^n$. It is obvious that $\rho_{cK} =c\rho_K$ for any $c >0$, where $cK = \{cx\, :\, x\in K\}$. The radial distance between $K,L \in \mathcal S_0^n$ is
\[
|\rho_K -\rho_L|_\infty = \max_{u\in S^{n-1}} |\rho_K(u) -\rho_L(u)|.
\]

A convex body in $\mathbb R^n$ is a compact convex subset of $\mathbb R^n$ with nonempty interior. For any convex body $K$, its support function is defined by
\[
h_K(x)=h(K,x): = \max_{y\in K} x\cdot y.
\]
It is well-known that a convex body is completely determined by its support function. The Hausdorff distance between convex bodies $K$ and $L$ is
\[
|h_K -h_L|_\infty = \max_{u\in S^{n-1}} |h_K(u) -h_L(u)|.
\]
Let $\mathcal K^n$ denote the set of all convex bodies of $\mathbb R^n$ and let $\mathcal K_0^n$ denote the set of all convex bodies containing the origin in its interior of $\mathbb R^n$. Note that on $\mathcal K_0^n$ the radial distance and Hausdorff distance are equivalent.

For $K \in \mathcal S_0^n$, denote
\begin{equation}\label{eq:rKRK}
R_K = \max_{u\in S^{n-1}} \rho_K(u),\qquad r_K = \min_{u\in S^{n-1}} \rho_K(u).
\end{equation}
Since $K \in \mathcal S_0^n$ then $0 < r_K \leq R_K < \infty.$

For a convex body $K$ and a direction $u\in S^{n-1}$, let $K_u$ denote the image of the orthogonal projection of $K$ on $u^\perp$, the subspace of $\mathbb R^n$ orthogonal to $u$. Let $f_u$ and $g_u$ denote the undergraph and overgraph functions of $K$ in the direction $u$, i.e., K is described by
\[
K=\{y'+tu: -f_u(y')\leq t\leq g_u(y');\quad y'\in K_u\}.
\]
Note that $f_u,g_u: K_u\rightarrow \R$ are concave functions. For $y'\in K_u$, we define
\begin{equation}\label{eq:sigmaandm}
\sigma(y')=\frac{f_u(y')+g_u(y')}{2} \quad \text{and}\quad m(y')=\frac{g_u(y')-f_u(y')}{2},
\end{equation}
that is, $\sigma(y')$ is a half of the length of the chord $K\cap \{y'+ \R u\}$, and $y'+m(y')u$ is the midpoint of this chord. With these notations, we have another description of $K$ as follows
\[
K=\{y'+(m(y')+t)u: y'\in K_u, |t|\leq \sigma(y')\}.
\]
The Steiner symmetrization of $K$ in the direction $u$ denoted by $S_uK$ is the convex body defined by
\[
S_uK=\{y'+tu: |t|\leq \sigma(y'), y'\in K_u\}.
\]
It follows from Fubini's theorem that $|S_uK| = |K|$ for any $u\in S^{n-1}$. Moreover, for any convex body $K$, there exists a sequence $\{u_i\}_{i\geq 1} \subset S^{n-1}$ such that the sequence of convex bodies $\{S_{u_i}\cdots S_{u_1}K\}_{i\geq 1}$ converges to an origin-centered ball of volume $|K|$. This is the content of Blaschke's selection theorem.

When considering the convex body $K \subset \mathbb R^{n-1}\times \mathbb R$, for $(x', t) \in \mathbb R^{n-1}\times \mathbb R$ we will usually write $h(K,x',t)$ rather than $h(K; (x', t))$. The following Lemma is, in fact, an immediate consequence of Fubini's theorem.

\begin{lemma}\label{Fub}
Let $K$ be a convex body in $\mathbb R^n$ and $u$ is a direction in $S^{n-1}$. Then the following maps $S:K\rightarrow S_uK$ defined by
\[
S(y'+(m(y')+t)u)=y'+tu,
\]
and $T:K\rightarrow K$ defined by
\[
T(y'+(m(y')+t)u)=y'+(m(y')-t)u
\]
with $y'\in K_u$ and $|t|\leq \sigma(y')$ are volume preserving maps.
\end{lemma}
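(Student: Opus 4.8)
The plan is to reduce the statement to the one-dimensional translation and reflection invariance of Lebesgue measure, which is exactly what Fubini's theorem delivers. After an orthogonal change of coordinates I may assume $u = e_n$ and write a generic point of $\mathbb{R}^n$ as $(y',s)$ with $y' \in u^\perp \cong \mathbb{R}^{n-1}$ and $s \in \mathbb{R}$. In these coordinates the second description of $K$ recalled above becomes
\[
K = \{(y',s)\ :\ y' \in K_u,\ |s - m(y')| \le \sigma(y')\},
\]
while $S_uK = \{(y',s) : y' \in K_u,\ |s| \le \sigma(y')\}$; moreover for $(y',s) \in K$ one computes $S(y',s) = (y', s - m(y'))$ and $T(y',s) = (y', 2m(y') - s)$, with inverses $S^{-1}(y',t) = (y', t + m(y'))$ and $T^{-1} = T$. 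Since $f_u$ and $g_u$ are concave, they are continuous and locally Lipschitz on $\mathrm{int}(K_u)$, hence so are $m = (g_u - f_u)/2$ and $\sigma = (f_u + g_u)/2$; consequently $S$ and $T$ restrict to bi-Lipschitz homeomorphisms on the full-measure open subsets of $K$ lying over $\mathrm{int}(K_u)$, so in particular they send Lebesgue measurable sets to Lebesgue measurable sets and the assertion ``$|S(A)| = |A|$ and $|T(A)| = |A|$ for every measurable $A \subseteq K$'' is meaningful.

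Next I would fix such an $A$ and, for each $y' \in \mathrm{int}(K_u)$, introduce the fiber $A_{y'} := \{s \in \mathbb{R} : (y',s) \in A\}$. By Fubini's theorem, $|A| = \int_{\mathrm{int}(K_u)} \mathcal{L}^1(A_{y'})\,dy'$ (the part of $K$ lying over $\partial K_u$ is $\mathcal{L}^n$-null, so it may be ignored), and the same formula holds with $A$ replaced by $S(A)$ or $T(A)$. The key observation --- and essentially the whole content of the lemma --- is that the fiber of $S(A)$ over $y'$ is precisely the translate $A_{y'} - m(y')$, while the fiber of $T(A)$ over $y'$ is the reflected translate $2m(y') - A_{y'}$. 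Because $\mathcal{L}^1$ is invariant under $s \mapsto s + c$ and under $s \mapsto -s$, this gives $\mathcal{L}^1\big((S(A))_{y'}\big) = \mathcal{L}^1(A_{y'}) = \mathcal{L}^1\big((T(A))_{y'}\big)$ for every such $y'$, and integrating back over $y'$ with Fubini once more yields
\[
|S(A)| = \int_{\mathrm{int}(K_u)} \mathcal{L}^1\big((S(A))_{y'}\big)\,dy' = \int_{\mathrm{int}(K_u)} \mathcal{L}^1(A_{y'})\,dy' = |A|,
\]
and in the same way $|T(A)| = |A|$. Choosing $A = K$ recovers the identity $|S_uK| = |K|$ noted in the remark preceding the lemma.

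There is no real obstacle here: the argument is routine once the fibers of $S(A)$ and $T(A)$ are identified. The only points that deserve a word of care are the measurability of the images $S(A)$ and $T(A)$ (handled by restricting to the full-measure part of $K$ over $\mathrm{int}(K_u)$, where $S$ and $T$ are locally bi-Lipschitz) and the harmless omission of the fibers over $\partial K_u$, where $m$ may fail to be continuous and the chord may degenerate, since that exceptional set is $\mathcal{L}^{n-1}$-null. An equivalent but slightly slicker route is to view $S$ and $T$ as the restrictions to $K$ of the global shear maps $(y',s) \mapsto (y', s - \tilde m(y'))$ and $(y',s) \mapsto (y', 2\tilde m(y') - s)$ of $\mathbb{R}^n$, where $\tilde m$ is any locally Lipschitz extension of $m$; these have Jacobian determinant $1$ and $-1$ respectively, so the change-of-variables formula gives the claim immediately. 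I would nevertheless present the fiberwise Fubini version, since it is the most transparent and is precisely what is meant by saying the lemma is an immediate consequence of Fubini's theorem.
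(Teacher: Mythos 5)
Your proof is correct and follows exactly the route the paper indicates: the paper states this lemma without proof, remarking only that it is an immediate consequence of Fubini's theorem, and your fiberwise argument (the maps fix $y'$ and act on each chord by a translation by $-m(y')$ or a reflection about $m(y')$, both of which preserve one-dimensional Lebesgue measure) is precisely that argument, with the measurability and boundary issues handled appropriately.
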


The following is well known (see \cite{CG02}).

\begin{lemma}\label{graph}
Suppose $K \in \mathcal K^n_0$ and $u \in S^{n-1}$. For any $y'\in  {\rm relint}(K_u)$, the overgraph and undergraph functions of K in direction u are given by
\begin{equation}\label{eq:overgraph}
g_u(y') = \min_{x'\in u^\perp} h(K,x'+u) - x'\cdot y',
\end{equation}
and
\begin{equation}\label{eq:undergraph}
f_u(y') = \min_{x'\in u^\perp} h(K,x'-u) - x'\cdot y'.
\end{equation}
\end{lemma}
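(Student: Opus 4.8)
The plan is to prove \eqref{eq:overgraph} and then deduce \eqref{eq:undergraph} from it by the symmetry $u\leftrightarrow -u$: replacing $u$ by $-u$ fixes the projection ($K_{-u}=K_u$) and interchanges the two graph functions, so that $g_{-u}=f_u$, and hence \eqref{eq:overgraph} applied in the direction $-u$ reads $f_u(y')=g_{-u}(y')=\min_{x'\in u^\perp}h(K,x'-u)-x'\cdot y'$, which is exactly \eqref{eq:undergraph}. To prove \eqref{eq:overgraph}, I would first rewrite the support function of $K$ in slices: every $y\in K$ is uniquely $y=z'+tu$ with $z'\in K_u$ and $-f_u(z')\le t\le g_u(z')$, and since $x'\cdot u=0$ we have $(x'+u)\cdot y=x'\cdot z'+t$; maximizing over $t$ in its range (the maximum occurring at $t=g_u(z')$) and then over $z'\in K_u$ gives
\[
h(K,x'+u)=\max_{z'\in K_u}\bigl(x'\cdot z'+g_u(z')\bigr),\qquad x'\in u^\perp .
\]
Consequently $\min_{x'\in u^\perp}\bigl(h(K,x'+u)-x'\cdot y'\bigr)=\min_{x'\in u^\perp}\max_{z'\in K_u}\bigl(x'\cdot(z'-y')+g_u(z')\bigr)$, and the lemma reduces to identifying this min--max with $g_u(y')$.

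The inequality ``$\ge$'' is immediate: for each fixed $x'\in u^\perp$, taking $z'=y'$ in the inner maximum (legitimate since $y'\in{\rm relint}(K_u)\subset K_u$) shows it is at least $g_u(y')$, and this bound persists after the outer infimum. For the inequality ``$\le$'', I would use that $g_u$ is a real-valued concave function on the convex set $K_u$ (as recorded above, $f_u,g_u:K_u\to\R$ are concave) and therefore admits a supergradient at the interior point $y'$: there is $x_0'\in u^\perp$ with $g_u(z')\le g_u(y')-x_0'\cdot(z'-y')$ for every $z'\in K_u$; membership $x_0'\in u^\perp$ is automatic, as $g_u$ is defined on a subset of the $(n-1)$-dimensional space $u^\perp$. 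For this particular $x_0'$ the inner maximum equals $g_u(y')$, so the outer infimum is $\le g_u(y')$ and is attained at $x_0'$ --- which both finishes the proof and justifies writing $\min$ rather than $\inf$. Combining the two inequalities yields \eqref{eq:overgraph}, and with it \eqref{eq:undergraph}.

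The only ingredient that is not routine bookkeeping is the existence of the supergradient $x_0'$, that is, the nonemptiness of the superdifferential of a finite concave function at a relative-interior point of its domain (equivalently, the supporting hyperplane theorem applied to the hypograph of $g_u$, the relevant supporting hyperplane being non-vertical precisely because $y'\in{\rm relint}(K_u)$). This is where the hypotheses $K\in\mathcal K^n_0$ and $y'\in{\rm relint}(K_u)$ are genuinely used --- at a boundary point of $K_u$ the function $g_u$ may well fail to have a supergradient --- and I expect it to be the only delicate point; everything else is Fubini-type slicing combined with the definition of the support function.
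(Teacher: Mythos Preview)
The paper does not actually prove this lemma: it is stated as ``well known'' with a reference to Campi and Gronchi \cite{CG02}, and no argument is given. Your proof is correct and is essentially the standard one --- the slicing identity $h(K,x'+u)=\max_{z'\in K_u}\bigl(x'\cdot z'+g_u(z')\bigr)$ displays $x'\mapsto h(K,x'+u)$ as the concave conjugate of $g_u$, and \eqref{eq:overgraph} is then the biconjugation identity, with attainment of the minimum coming from the existence of a supergradient of the concave function $g_u$ at the relative-interior point $y'$; the reduction of \eqref{eq:undergraph} to \eqref{eq:overgraph} via $u\mapsto -u$ is also fine.
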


The following estimate was proved in \cite{LYZcentroid}.

\begin{lemma}\label{boundonKu}
Suppose $K \in \mathcal K^n_0$ and $u \in S^{n-1}$. If $y'\in (r_K/2)B \cap u^\perp$ and $x'_1, x'_2\in  u^\perp$ are such that
\[
g_u(y') = h(K,x'_1+tu) - x'_1\cdot y'\qquad\text{\rm and}\qquad f_u(y') = h(K,x'_2-tu) - x'_2\cdot y',
\]
then both
\[
|x'_1|,\, |x'_2| \leq \frac{2R_K}{r_K}.
\]
\end{lemma}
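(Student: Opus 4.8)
The plan is to combine the variational characterisation of the graph functions from Lemma~\ref{graph} with the elementary two-sided inclusion $r_KB\subseteq K\subseteq R_KB$, which is immediate from the definitions of $r_K$ and $R_K$ in \eqref{eq:rKRK}.

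First I would check that the hypotheses of Lemma~\ref{graph} are met: since $r_KB\subseteq K$ we have $r_KB\cap u^\perp\subseteq K_u$, so $y'\in(r_K/2)B\cap u^\perp$ lies in $\mathrm{relint}(K_u)$ and formulas \eqref{eq:overgraph}--\eqref{eq:undergraph} apply. In particular $x'_1$ is a minimiser of $x'\mapsto h(K,x'+u)-x'\cdot y'$ over $u^\perp$, so comparing its value at $x'_1$ with the value at $x'=0$ gives
\[
h(K,x'_1+u)-x'_1\cdot y'\le h(K,u).
\]
Next I would bound each side using the inclusions above together with $|y'|\le r_K/2$: from $r_KB\subseteq K$ one gets $h(K,z)\ge r_K|z|$ for every $z$, hence $h(K,x'_1+u)\ge r_K|x'_1+u|\ge r_K|x'_1|$ (using $x'_1\perp u$); from $K\subseteq R_KB$ one gets $h(K,u)\le R_K$; and Cauchy--Schwarz gives $x'_1\cdot y'\le(r_K/2)|x'_1|$. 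Combining,
\[
r_K|x'_1|\le h(K,u)+x'_1\cdot y'\le R_K+\frac{r_K}{2}|x'_1|,
\]
so $(r_K/2)|x'_1|\le R_K$, that is $|x'_1|\le 2R_K/r_K$.

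The estimate for $|x'_2|$ is identical, now using \eqref{eq:undergraph} and comparing the minimiser $x'_2$ of $x'\mapsto h(K,x'-u)-x'\cdot y'$ with $x'=0$; the sign of $u$ is irrelevant to the inequalities used. I do not expect a genuine obstacle: the proof is a short sandwich argument, and the only delicate points are verifying that $y'\in\mathrm{relint}(K_u)$ so that Lemma~\ref{graph} is legitimate, and ensuring that the linear term $x'\cdot y'$ is strictly dominated by the lower bound $r_K|x'|$ on $h(K,x'\pm u)$ — which is precisely what the normalisation $|y'|\le r_K/2$ provides (any fixed fraction of $r_K$ below $1$ would do, with a correspondingly adjusted constant).
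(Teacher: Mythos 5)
Your proof is correct, and it is essentially the argument behind this lemma: the paper itself does not reprove it but quotes it from \cite{LYZcentroid}, where the same sandwich bound $r_KB\subseteq K\subseteq R_KB$ combined with $g_u(y')\le h(K,u)\le R_K$, $h(K,x'_1+u)\ge r_K|x'_1|$ and $|x'_1\cdot y'|\le (r_K/2)|x'_1|$ is used. Your preliminary check that $y'\in\mathrm{relint}(K_u)$, so that Lemma~\ref{graph} applies, is a welcome extra precision.
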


Finally, in order to establish the equality case in our Orlicz--Lorentz Busemann--Petty centroid inequality in Theorem \ref{maintheorem}, we need to know which characterizations of $K\in \mathcal K_0^n$ are to being an origin--centered ellipsoid. A classical result says that a convex body $K\in \mathcal K_0^n$ is an origin--centered ellipsoid if and only if for any direction $u\in S^{n-1}$ all of the midpoints of the chords of $K$ parallel to $u$ lie in a subspace of $\mathbb R^n$. In our proof, we need the following characterization of the ellipsoid due to Gruber and Ludwig \cite{Gruber,GruberLudwig}.

\begin{lemma}\label{ellipsoid}
A convex body $K\in \mathcal K^n_0$ is an origin--centered ellipsoid if
and only if there exists an $\epsilon_K > 0$ such that for any direction $u\in S^{n-1}$ all of the chords of K that come within a distance of $\epsilon_K$ of the origin and are parallel to u, have midpoints that lie in a subspace of $\mathbb R^n$.
\end{lemma}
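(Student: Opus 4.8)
The plan is to prove the two implications separately: the forward one is elementary and uses no smallness of $\epsilon_K$, while the converse reduces to the local ellipsoid characterisation of Gruber and Ludwig. For the ``only if'' direction, write $K=\{x\in\mathbb R^n:\langle Mx,x\rangle\le1\}$ with $M$ symmetric positive definite; the endpoints $t_1<t_2$ of a chord $\{a+tu:t\in[t_1,t_2]\}$ are the two roots of $t\mapsto\langle M(a+tu),a+tu\rangle-1$, so $t_1+t_2=-2\langle Ma,u\rangle/\langle Mu,u\rangle$ and the midpoint $p=a+\tfrac{t_1+t_2}{2}u$ satisfies $\langle Mp,u\rangle=0$. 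Hence every midpoint of a chord parallel to $u$ lies in the linear subspace $(Mu)^\perp$, and one may take $\epsilon_K=r_K$ (indeed any positive value).

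For the converse, put $\delta=\min(\epsilon_K,r_K)>0$; since $0\in\mathrm{int}\,K$ we have $\delta B\subset K$ and $0\in\mathrm{relint}(K_u)$ for every $u\in S^{n-1}$. Fix $u$ and, in the notation of \eqref{eq:sigmaandm}, write $m_u$ for the midpoint height over $K_u$, so that the midpoint of the chord of $K$ over $y'\in K_u$ is $y'+m_u(y')u$. If $|y'|<\delta$, then $y'\in K$, the chord over $y'$ contains the point $y'$, and $y'$ lies within distance $|y'|<\epsilon_K$ of the origin; so by hypothesis the sheet $S_u=\{\,y'+m_u(y')u:\ y'\in u^\perp,\ |y'|<\delta\,\}$ lies in some proper linear subspace. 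Its orthogonal projection onto $u^\perp$ is the full $\delta$-disc, which spans $u^\perp$, so $\dim\mathrm{span}(S_u)\ge n-1$; combined with properness, $H_u:=\mathrm{span}(S_u)$ is a hyperplane transverse to $\mathbb R u$, hence the graph over $u^\perp$ of a linear functional. Therefore $m_u$ agrees on $\{|y'|<\delta\}$ with that linear functional; in particular $m_u(0)=0$, and since $m_u(0)=\tfrac12\bigl(\rho_K(u)-\rho_K(-u)\bigr)$, letting $u$ vary gives $\rho_K(u)=\rho_K(-u)$ for all $u$, i.e. $K=-K$.

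It then remains to deduce that a centrally symmetric convex body such that, for every direction $u$, the midpoints of the chords parallel to $u$ lying in a fixed neighbourhood of the origin all belong to one hyperplane through the origin, must be an origin--centered ellipsoid. This is precisely the Gruber--Ludwig characterisation \cite{Gruber,GruberLudwig}: the affine-conjugacy condition ``$m_u$ affine near $0$ for every $u$'' propagates, by convexity, from a neighbourhood of the origin to all of $K$ and forces $\partial K$ to be a quadric. I would invoke their result directly, having checked above the only nontrivial point of translation, namely that the hypothesis here — which constrains only chords coming within $\epsilon_K$ of the origin — already pins down each $m_u$ as affine on the whole disc $\delta B\cap u^\perp$, because $r_KB\subset K$ forces those chords to pass near the origin. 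The main obstacle is exactly this last, quoted step; everything else is routine bookkeeping with the over/undergraph functions together with Lemma \ref{Fub}.
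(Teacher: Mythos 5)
The paper offers no proof of this lemma: it is quoted as a known characterization of ellipsoids due to Gruber and Gruber--Ludwig \cite{Gruber,GruberLudwig}, so the only fair comparison is with that citation. Your forward direction is correct and complete (and, as you note, needs no smallness of $\epsilon_K$): for $K=\{x:\langle Mx,x\rangle\le 1\}$ the midpoint of every chord parallel to $u$ lies in the hyperplane $(Mu)^\perp$. Your preparatory work on the converse is also sound: a chord through a point $y'\in u^\perp$ with $|y'|<\min(\epsilon_K,r_K)$ does come within $\epsilon_K$ of the origin, the span of the corresponding midpoints must be a proper subspace projecting onto all of $u^\perp$, hence a hyperplane transverse to $\mathbb{R}u$, so the midpoint function $m$ of \eqref{eq:sigmaandm} is linear on a disc about the origin and in particular $K=-K$. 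But at that point you have not actually reduced the problem: the statement you then invoke --- that a body whose midpoint functions are linear near the origin in every direction is an origin-centered ellipsoid --- \emph{is} the nontrivial content of the lemma, i.e.\ precisely what Gruber and Gruber--Ludwig prove; your one-line justification that local affineness ``propagates by convexity'' and ``forces $\partial K$ to be a quadric'' is not an argument (the classical Bertrand--Brunn/Blaschke midpoint characterization uses \emph{all} chords, and the whole point of \cite{Gruber,GruberLudwig} is that the local condition suffices), and you acknowledge this yourself. So, judged as a self-contained proof, the converse has a genuine gap at exactly that step; judged against the paper, your treatment is essentially the paper's own --- defer the hard implication to \cite{Gruber,GruberLudwig} --- augmented by a correct but not strictly necessary verification of the easy direction and of central symmetry.
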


\section{Basic properties of Orlicz--Lorentz centroid bodies}
Recall that if $\phi\in \mathcal C$ is an Orlicz function, $\omega: (0,1)\to (0,\infty)$ is a weight function and $K\in \mathcal S_0^n$, then the Orlicz--Lorentz centroid body of $K$ denoted by $\Gamma_{\phi,\omega}K$ is defined to be a convex body whose support function is
\begin{equation}\label{eq:definitionofOLCB}
h(\Gamma_{\phi,\omega}K,x) = \|f_{x,K}\|_{\Lambda_{\phi,\omega}} = \inf\left\{\lambda >0\, :\, \int_0^1 \phi\left(\frac{f_{x,K}^*(t)}{\lambda}\right) \omega(t) dt \leq 1\right\}.
\end{equation}
Since $\phi\in \mathcal C$, and $\omega$ is strictly positive then $h(\Gamma_{\phi,\omega}K,x) > 0$ for any $x\not=0$.

Note that the function
\[
\Phi: \lambda \in (0,1) \longmapsto  \int_0^1 \phi\left(\frac{f_{x,K}^*(t)}{\lambda}\right) \omega(t) dt
\]
is strictly decreasing, continous on $(0,\infty)$ since $\int_0^1 \omega(t) dt < \infty$. Moreover, it satisfies
\[
\lim_{\lambda \to 0^+} \Phi(\lambda) = \infty,\qquad \lim\limits_{\lambda\to\infty} \Phi(\lambda) =0.
\]
Thus we easily get the following.

\begin{lemma}\label{norm}
Suppose that $K\in \mathcal{S}_0^n$ and $u_0\in S^{n-1}$. Then 
\[
\int_0^1\phi\left(\frac{f_{u_0,K}^{*}(t)}{\lambda_0}\right)\omega(t)dt=1
\]
if and only if 
\[
h(\Gamma_{\phi,\omega}K,u_0)=\lambda_0.
\]
\end{lemma}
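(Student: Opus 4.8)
The plan is to deduce the lemma directly from the monotonicity, continuity and boundary behaviour of the auxiliary function $\Phi(\lambda):=\int_0^1\phi\bigl(f_{u_0,K}^*(t)/\lambda\bigr)\omega(t)\,dt$ recorded just above the statement. By the definition \eqref{eq:definitionofOLCB}, $h(\Gamma_{\phi,\omega}K,u_0)=\inf\{\lambda>0:\Phi(\lambda)\le 1\}$, so the lemma reduces to showing that the equation $\Phi(\lambda)=1$ has a unique root $\lambda_*$ and that this root coincides with that infimum.

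To make this precise I would first note that $\Phi$ is finite on all of $(0,\infty)$: the star body $K$ is bounded, so $f_{u_0,K}(y)=u_0\cdot y$ is bounded on $K$, hence $f_{u_0,K}^*$ is bounded on $(0,1)$; since $\phi$ is continuous and $\omega\in L^1(0,1)$, the integral converges for every $\lambda>0$. Next, $\Phi$ is strictly decreasing: $s\mapsto\phi(s)$ is strictly increasing on $[0,\infty)$ and $\omega>0$, so the integrand is nonincreasing in $\lambda$ and strictly decreasing at every $t$ with $f_{u_0,K}^*(t)>0$, and the set of such $t$ has positive measure because $u_0\ne 0$ and $K$ has nonempty interior. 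Continuity of $\Phi$ follows by dominated convergence (near a fixed $\lambda_0$ one dominates the integrand by $\phi\bigl(f_{u_0,K}^*(t)/(\lambda_0/2)\bigr)\omega(t)$); the limit $\Phi(\lambda)\to\infty$ as $\lambda\to 0^+$ follows by monotone convergence, the integrand increasing to $+\infty$ on $\{f_{u_0,K}^*>0\}$ since $\phi(s)\to\infty$; and $\Phi(\lambda)\to 0$ as $\lambda\to\infty$ follows by dominated convergence, the integrand tending pointwise to $\phi(0)=0$.

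Consequently $\Phi$ is a continuous strictly decreasing bijection of $(0,\infty)$ onto $(0,\infty)$, so there is a unique $\lambda_*$ with $\Phi(\lambda_*)=1$, and $\Phi(\lambda)\le 1$ holds exactly for $\lambda\ge\lambda_*$. Hence $\inf\{\lambda>0:\Phi(\lambda)\le 1\}=\lambda_*$, i.e. $h(\Gamma_{\phi,\omega}K,u_0)=\lambda_*$; in particular $\Phi(\lambda_0)=1$ is equivalent to $\lambda_0=\lambda_*=h(\Gamma_{\phi,\omega}K,u_0)$, which is the asserted equivalence. There is no genuine obstacle here — the lemma is a bookkeeping consequence of the remarks preceding it; the only points needing a word of justification are that $f_{u_0,K}^*$ is not the zero function (this is what forces strict monotonicity and the blow-up at $0$) and the choice of the correct convergence theorem in each limit, both of which are immediate from the boundedness of $K$ and the integrability of $\omega$ on $(0,1)$.
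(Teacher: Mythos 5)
Your proof is correct and follows essentially the same route as the paper, which records exactly these properties of $\Phi$ (strict monotonicity, continuity via $\int_0^1\omega(t)\,dt<\infty$, and the limits $\infty$ at $0^+$ and $0$ at $\infty$) in the remarks immediately preceding the lemma and deduces the statement from them; your write-up simply supplies the routine justifications (dominated/monotone convergence, positivity of $f_{u_0,K}^*$ on a set of positive measure) that the paper leaves implicit.
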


Since $\|\cdot\|_{\Lambda_{\phi,\omega}}$ is a norm on $\Lambda_{\phi,\omega}$, thus we have:

\begin{lemma}
If $K\in\mathcal{S}_0^n$ then $h(\Gamma_{\phi,\omega}K,.)$ is the support function of an origin--centered convex body in $\mathcal{K}_o^n$.
\end{lemma}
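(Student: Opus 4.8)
The plan is to show that $h(\Gamma_{\phi,\omega}K,\cdot)$ is positively homogeneous of degree one, nonnegative, and subadditive, since these three properties characterize support functions of compact convex sets, and positivity away from the origin (already noted in the excerpt) then places the body in $\mathcal{K}_0^n$; origin-symmetry follows because $f_{-x,K} = -f_{x,K}$ has the same distribution function as $f_{x,K}$, hence the same Orlicz--Lorentz norm.

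First I would observe that for $c > 0$ we have $f_{cx,K} = c\,f_{x,K}$, so $f_{cx,K}^* = c\,f_{x,K}^*$, and a change of the scaling parameter $\lambda \mapsto c\lambda$ in the defining infimum \eqref{eq:definitionofOLCB} immediately gives $h(\Gamma_{\phi,\omega}K,cx) = c\,h(\Gamma_{\phi,\omega}K,x)$. Nonnegativity is clear since $\lambda$ ranges over positive reals. The remaining point, subadditivity $h(\Gamma_{\phi,\omega}K,x+y) \le h(\Gamma_{\phi,\omega}K,x) + h(\Gamma_{\phi,\omega}K,y)$, is exactly the triangle inequality for $\|\cdot\|_{\Lambda_{\phi,\omega}}$ applied to the functions $f_{x,K}$ and $f_{y,K}$, using the linearity $f_{x+y,K} = f_{x,K} + f_{y,K}$ on $K$. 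So the whole lemma reduces to the assertion — which the text invokes by saying ``$\|\cdot\|_{\Lambda_{\phi,\omega}}$ is a norm'' — that the Orlicz--Lorentz functional \eqref{O-LN} is genuinely a norm on $\Lambda_{\phi,\omega}$.

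The main obstacle is therefore the triangle inequality for $\|\cdot\|_{\Lambda_{\phi,\omega}}$, and the subtlety is that the decreasing rearrangement is \emph{not} subadditive pointwise, i.e. $(f+g)^*(t) \le f^*(t) + g^*(t)$ fails in general; what holds is the weaker Hardy--Littlewood-type inequality $(f+g)^*(t) \le f^*(t/2) + g^*(t/2)$, or more usefully the integral (Hardy--Littlewood--Pólya) majorization $\int_0^s (f+g)^*(t)\,dt \le \int_0^s f^*(t)\,dt + \int_0^s g^*(t)\,dt$ for all $s$. The standard route is: given $\lambda = \|f\|_{\Lambda_{\phi,\omega}}$ and $\mu = \|g\|_{\Lambda_{\phi,\omega}}$ with $\int_I \phi(f^*/\lambda)\omega \le 1$ and $\int_I \phi(g^*/\mu)\omega \le 1$, write the target ratio with denominator $\lambda + \mu$, use convexity of $\phi$ to split $\phi\big((f+g)^*/(\lambda+\mu)\big)$ via the weights $\lambda/(\lambda+\mu)$ and $\mu/(\lambda+\mu)$, and combine this with the fact that $\phi$ composed with a nonincreasing function, integrated against the nonincreasing weight $\omega$, respects the integral majorization above (this last step is a Hardy-type lemma: if $u^* \prec v^*$ in the sense of integral majorization and $\Psi$ is convex increasing, then $\int \Psi(u^*)\omega \le \int \Psi(v^*)\omega$ for nonincreasing $\omega$). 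Since the paper treats this as ``known,'' I would simply cite the classical Orlicz--Lorentz literature (e.g. the references \cite{Lo1,Lo2,Lux,O}) for the fact that \eqref{O-LN} defines a norm, and spend the body of the proof on the three easy reductions — homogeneity, the observation $f_{x+y,K}=f_{x,K}+f_{y,K}$, and the symmetry $f_{-x,K}=-f_{x,K}$ — concluding that $h(\Gamma_{\phi,\omega}K,\cdot)$ is the support function of a body in $\mathcal{K}_0^n$ that is symmetric about the origin.
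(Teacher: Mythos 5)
Your proposal is correct and follows the same route as the paper, which likewise derives the lemma directly from the fact that $\|\cdot\|_{\Lambda_{\phi,\omega}}$ is a norm together with the linearity of $x\mapsto f_{x,K}$ and the distribution-invariance giving origin symmetry. You additionally sketch why the triangle inequality holds (convexity of $\phi$ plus Hardy--Littlewood--P\'olya majorization and a Hardy-type lemma for the nonincreasing weight $\omega$), a point the paper simply takes as known from the Orlicz--Lorentz literature.
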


Next lemma gives us the upper and lower bounds for the support function $h(\Gamma_{\phi,\omega}K,.)$. 
\begin{lemma}\label{boundsf}
If $K\in\mathcal{S}_o^n$ then
\[
\frac{1}{r_Kf_{u,B}^{*}(1/2)\phi^{-1}\left(\frac{1}{\int_0^{c(n,K)}\omega(t)dt}\right)}\leq h(\Gamma_{\phi,\omega}K,u)\leq \frac{R_K}{\phi^{-1}\left(\frac{1}{\int_0^1\omega(t)dt}\right)}
\]
for any $u\in S^{n-1}$, where $r_K, R_K$ is defined by \eqref{eq:rKRK}, $\phi^{-1}$ denotes the inverse function of $\phi$, $f_{u,B}^{*}$ is the decreasing rearangement function of the function $x\mapsto  u \cdot x$ on $B$ and is defined on the measure space $(B,\mathcal B_B, \mu^B)$ and
\[
c(n,K)=\frac{r_K^n\omega_n}{2|K|}.
\]
\end{lemma}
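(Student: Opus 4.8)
The goal is to sandwich $h(\Gamma_{\phi,\omega}K,u)$ between the two stated quantities. By Lemma~\ref{norm}, if I set $\lambda_0 = h(\Gamma_{\phi,\omega}K,u)$ then $\int_0^1 \phi\!\left(f_{u,K}^*(t)/\lambda_0\right)\omega(t)\,dt = 1$, so the strategy is to produce upper and lower bounds on this integral in terms of $\lambda_0$ and then invert. Since $\phi$ is strictly increasing with a continuous inverse $\phi^{-1}$, and since $t\mapsto \int_0^t \omega$ is increasing, the inequalities on the integral translate directly into the claimed inequalities on $\lambda_0$.

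\textbf{Upper bound.} For the upper estimate I want to show $\lambda_0 \le R_K/\phi^{-1}(1/\int_0^1\omega)$. The key observation is that $|f_{u,K}(y)| = |u\cdot y| \le |y| \le R_K$ for every $y\in K$, since $K\subset R_K B$. Hence the distribution function $\mu_{f_{u,K}}(t)$ vanishes for $t\ge R_K$, which forces $f_{u,K}^*(t)\le R_K$ for all $t\in(0,1)$. Therefore
\[
1 = \int_0^1 \phi\!\left(\frac{f_{u,K}^*(t)}{\lambda_0}\right)\omega(t)\,dt \le \phi\!\left(\frac{R_K}{\lambda_0}\right)\int_0^1 \omega(t)\,dt,
\]
using monotonicity of $\phi$. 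Rearranging gives $\phi(R_K/\lambda_0) \ge 1/\int_0^1\omega$, and applying $\phi^{-1}$ (monotone) yields $R_K/\lambda_0 \ge \phi^{-1}(1/\int_0^1\omega)$, i.e. the desired upper bound.

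\textbf{Lower bound.} This is the harder direction and the main obstacle. I need a lower bound on $f_{u,K}^*(t)$, equivalently a lower bound on how large $|u\cdot y|$ is on a set of nonnegligible measure. The idea is to exploit that $K$ contains the ball $r_K B$: by a change of variables comparing $K$ with $r_K B$, and using that $B\supset r_K^{-1}\cdot(r_K B)$, one shows that for the relevant range of $t$ the rearrangement $f_{u,K}^*(t)$ is bounded below by $r_K$ times a corresponding rearrangement value for the linear function on the unit ball — this is where the constant $c(n,K) = r_K^n\omega_n/(2|K|)$ enters: it is (up to the factor $1/2$) the ratio $|r_K B|/|K|$, i.e. the $\mu^K$-measure of the inscribed ball. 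Concretely, for $t \le c(n,K)$ one expects $f_{u,K}^*(t) \ge r_K\, f_{u,B}^*(2t\,|K|/|r_KB|) \ge r_K\, f_{u,B}^*(1/2)$, using that $f_{u,B}^*$ is nonincreasing. Then
\[
1 = \int_0^1 \phi\!\left(\frac{f_{u,K}^*(t)}{\lambda_0}\right)\omega(t)\,dt \ge \int_0^{c(n,K)} \phi\!\left(\frac{r_K f_{u,B}^*(1/2)}{\lambda_0}\right)\omega(t)\,dt = \phi\!\left(\frac{r_K f_{u,B}^*(1/2)}{\lambda_0}\right)\int_0^{c(n,K)}\omega(t)\,dt,
\]
so $\phi\!\left(r_K f_{u,B}^*(1/2)/\lambda_0\right) \le 1/\int_0^{c(n,K)}\omega$, and applying $\phi^{-1}$ and rearranging gives $\lambda_0 \ge 1/\bigl(r_K f_{u,B}^*(1/2)\,\phi^{-1}(1/\int_0^{c(n,K)}\omega)\bigr)$, which is exactly the stated lower bound. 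The delicate point to verify carefully is the rearrangement inequality relating $f_{u,K}^*$ and $f_{u,B}^*$ on the normalized measure spaces $(K,\mathcal B_K,\mu^K)$ and $(B,\mathcal B_B,\mu^B)$: since $r_K B\subseteq K$, we have $\{y\in K: |u\cdot y|>s\} \supseteq \{y\in r_K B: |u\cdot y|>s\}$, hence $\mu^K_{f_{u,K}}(s) \ge \frac{|r_K B|}{|K|}\,\mu^{r_K B}_{f_{u,r_KB}}(s) = \frac{r_K^n\omega_n}{|K|}\,\mu^{B}_{f_{u,B}}(s/r_K)$ by scaling; feeding this into the definition of the decreasing rearrangement as an infimum yields $f_{u,K}^*(t)\ge r_K\, f_{u,B}^*\bigl(t|K|/(r_K^n\omega_n)\bigr)$ for all $t>0$, and restricting to $t\le c(n,K) = r_K^n\omega_n/(2|K|)$ makes the argument of $f_{u,B}^*$ at most $1/2$, so monotonicity of $f_{u,B}^*$ finishes it.
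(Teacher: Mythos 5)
Your proof follows essentially the same route as the paper's: the upper bound from $K\subset R_KB$ (so $f_{u,K}^*\le R_K$), and the lower bound from $r_KB\subset K$ via the distribution-function comparison $\mu^K_{f_{u,K}}(s)\ge \frac{r_K^n\omega_n}{|K|}\,\mu^B_{f_{u,B}}(s/r_K)$, which gives $f_{u,K}^*(t)\ge r_Kf_{u,B}^*(1/2)$ for $t\le c(n,K)$; this is exactly the argument in the paper. One caveat on your final step: from $\phi\bigl(r_Kf_{u,B}^*(1/2)/\lambda_0\bigr)\le 1\big/\int_0^{c(n,K)}\omega(t)\,dt$, applying $\phi^{-1}$ gives $\lambda_0\ge r_Kf_{u,B}^*(1/2)\Big/\phi^{-1}\Bigl(\tfrac{1}{\int_0^{c(n,K)}\omega(t)\,dt}\Bigr)$, not the stated $\lambda_0\ge 1\Big/\Bigl(r_Kf_{u,B}^*(1/2)\,\phi^{-1}\bigl(\tfrac{1}{\int_0^{c(n,K)}\omega(t)\,dt}\bigr)\Bigr)$; the two expressions differ unless $r_Kf_{u,B}^*(1/2)=1$, so the claim ``which is exactly the stated lower bound'' does not follow from your algebra. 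The same slip occurs in the paper's own statement and in the last line of its proof (evidently a typo), and it is harmless for the subsequent applications of the lemma, which only require a strictly positive lower bound independent of $u$ --- but you should record the bound your computation actually yields rather than the mis-stated one.
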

\begin{proof}
We follow the argument in the proof of Lemma $2.3$ in \cite{LYZcentroid}. Given $u\in S^{n-1}$ and suppose that $h(\Gamma_{\phi,\omega}K,u)=\lambda_0$, by Lemma~\ref{norm} we have
\begin{equation}\label{O-LN1}
\int_0^1\phi\left(\frac{f_u^{*}(t)}{\lambda_0}\right)\omega(t)dt=1.
\end{equation}
We first prove the upper bound. By the definition of $R_K$, we have $K\subset R_KB$ which implies $|f_{u,K}(x)|\leq R_K$ for any $x\in K$, hence $f_{u,K}^{*}(t)\leq R_K$ for any $t\in (0,1)$. Thus, by \eqref{O-LN1} and the strict increasing monotonicity of $\phi$, we obtain
\[
\phi\left(\frac{R_K}{\lambda_0}\right)\int_0^1\omega(t)dt \geq 1,
\]
or equivalently,
\[
\lambda_0\leq \frac{R_K}{\phi^{-1}\left(\frac{1}{\int_0^1\omega(t)dt}\right)}.
\]

We next prove the lower bound. By the definition of $r_K$, we have $r_KB\subset K$ then we have
\[
\{x\in K: |x\cdot u|>t\}\supset r_K\left\{x\in B: |x\cdot u|>\frac{t}{r_K}\right\},\qquad\forall\, t>0,
\]
which then implies 
\[
\mu^K_{f_{u,K}}(t)\geq \frac{r_K^n\omega_n}{|K|}\mu^B_{f_{u,B}}\left(\frac{t}{r_K}\right),\qquad\forall\, t>0.
\]
Thus, by the definition of the decreasing rearrangement function, we readily obtain
\[
f_{u,K}^{*}(t)\geq 
\begin{cases}
r_Kf_{u,B}^{*}(\frac{|K|}{r_K^n\omega_n}t)&\mbox{if $t< \frac{r_K^n \omega_n}{|K|}$,}\\
0&\mbox{if $\frac{r_K^n \omega_n}{|K|}\leq t <1$.}
\end{cases}
\]
Denote $c(n,K) = r_K^n \omega_n /(2 |K|)$ we then have
\begin{equation}\label{O-LN2}
f_{u,K}^{*}(t)\geq r_Kf_{u,B}^{*}\left(\frac{1}{2}\right),\qquad \forall\,t\in(0, c(n,K)].
\end{equation}
It follows from \eqref{O-LN1},~\eqref{O-LN2} and the strictly increasing monotonicity of $\phi$ that
\[
\phi\left(\frac{r_Kf_{u,B}^{*}(1/2)}{\lambda_0}\right)\int_0^{c(n,K)}\omega(t)dt\leq 1,
\]
or equivalently,
\[
\lambda_0\geq \frac{1}{r_Kf_{u,B}^{*}(1/2)\phi^{-1}\left(\frac{1}{\int_0^{c(n,K)}\omega(t)dt}\right)}.
\]
\end{proof}
Since $f_{u,B}^*$ does not depend on $u\in S^{n-1}$, hence Lemma \ref{boundsf} gives a lower bound of $h(\Gamma_{\phi,\omega}K,u)$ which is independent of $u\in S^{n-1}$. In the next lemma, we show that the Orlicz-Lorentz centroid operator $\Gamma_{\phi,\omega}$ commutes with any $A\in GL(n)$.
\begin{lemma}\label{commute}
If $K\in \mathcal{S}_o^n$ and $A\in GL(n)$ then
\[
\Gamma_{\phi,\omega}(AK) = A\Gamma_{\phi,\omega}K.
\]
\end{lemma}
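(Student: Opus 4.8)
The plan is to show that for any $A \in GL(n)$ the support functions of $\Gamma_{\phi,\omega}(AK)$ and $A\Gamma_{\phi,\omega}K$ agree at every $x \in \mathbb R^n$; since a convex body is determined by its support function, this yields the claim. First I would recall the standard identity $h(AL, x) = h(L, A^t x)$ for any convex body $L$; applying it to $L = \Gamma_{\phi,\omega}K$ reduces the problem to proving
\[
h(\Gamma_{\phi,\omega}(AK), x) = h(\Gamma_{\phi,\omega}K, A^t x)
\qquad\text{for all } x \in \mathbb R^n .
\]
By the defining formula \eqref{eq:definitionofOLCB}, the left side is $\|f_{x,AK}\|_{\Lambda_{\phi,\omega}}$ computed on the measure space $(AK, \mathcal B_{AK}, \mu^{AK})$, and the right side is $\|f_{A^t x, K}\|_{\Lambda_{\phi,\omega}}$ computed on $(K, \mathcal B_K, \mu^K)$.

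The key step is the observation that the change of variables $y = Az$, $z \in K$, carries the normalized measure $\mu^K$ on $K$ onto the normalized measure $\mu^{AK}$ on $AK$: indeed, $|AK| = |A|\,|K|$ and the Jacobian of $z \mapsto Az$ is $|A|$, so the factors of $|A|$ cancel and $\mu^{AK}$ is exactly the pushforward of $\mu^K$ under $A$. Under this change of variables the function $f_{x,AK}(y) = x \cdot y$ becomes $f_{x,AK}(Az) = x \cdot Az = (A^t x)\cdot z = f_{A^t x, K}(z)$. Hence $f_{x,AK}$ (as a function on the measure space $(AK,\mu^{AK})$) and $f_{A^t x, K}$ (as a function on $(K,\mu^K)$) have the same distribution function:
\[
\mu^{AK}_{f_{x,AK}}(t) = \mu^{AK}\big(\{y \in AK : |x\cdot y| > t\}\big)
= \mu^K\big(\{z \in K : |(A^t x)\cdot z| > t\}\big)
= \mu^K_{f_{A^t x, K}}(t)
\]
for every $t > 0$. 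Consequently their decreasing rearrangements coincide, $f^*_{x,AK}(s) = f^*_{A^t x, K}(s)$ for all $s \in (0,1)$.

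Since the Orlicz--Lorentz norm \eqref{O-LN} depends on a function only through its decreasing rearrangement — a fact already noted in the paper right after the definition of $\|\cdot\|_{\Lambda_{\phi,\omega}}$ — it follows immediately that $\|f_{x,AK}\|_{\Lambda_{\phi,\omega}} = \|f_{A^t x, K}\|_{\Lambda_{\phi,\omega}}$, which is precisely the displayed identity above, and the proof is complete. I expect the only point requiring genuine care to be the verification that the normalization of $\mu^K$ and $\mu^{AK}$ makes the pushforward identity exact (so that no stray factor $|A|$ survives); everything else is a routine application of the change-of-variables formula together with the rearrangement-invariance of the norm. One should also note in passing that this argument uses nothing special about convex bodies beyond $K \in \mathcal S_0^n$, so the lemma in fact holds for all star bodies.
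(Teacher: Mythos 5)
Your proposal is correct and follows essentially the same route as the paper: both arguments use the change of variables $y=Az$ (i.e.\ the identity $f_{x,AK}(Az)=f_{A^tx,K}(z)$ and the cancellation of the Jacobian against the normalization of $\mu^{AK}$) to show that $f_{x,AK}$ and $f_{A^tx,K}$ have the same distribution function, hence the same decreasing rearrangement, and then conclude via the rearrangement-invariance of $\|\cdot\|_{\Lambda_{\phi,\omega}}$ together with $h(A\Gamma_{\phi,\omega}K,x)=h(\Gamma_{\phi,\omega}K,A^tx)$. Your remark that the exact pushforward of normalized measures is the only point needing care, and that the argument works for all star bodies, matches the paper's setting, which indeed states the lemma for $K\in\mathcal S_0^n$.
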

\begin{proof}
Let $u\in \R^n$, it is evident that $f_{u,AK}(x)=f_{A^tu,K}(A^{-1}x)$ for any $x\in \mathbb R^n$. Hence 
\[
\{x\in AK: |f_{u,AK}(x)|>s\}=A\{x\in K: |f_{u,K}(x)|>s\},
\]
which then implies $\mu^{AK}_{f_{u,AK}}(s)=\mu^K_{f_{A^tu,K}}(s)$ for any $s > 0$. Consequently, we get
\[
f_{u,AK}^{*}(t)=f_{A^tu,K}^{*}(t),\qquad\forall\, t\in (0,1).
\]
The definition of the Orlicz--Lorentz centroid body \eqref{eq:definitionofOLCB} then yields
\[
h(\Gamma_{\phi,\omega}(AK),u)=h(\Gamma_{\phi,\omega}K,A^tu)=h(A\Gamma_{\phi,\omega}K,u),
\]
for any $u\in \mathbb R^n$. This finishes our proof.
\end{proof}

We next prove that the Orlicz-Lorentz centroid operator $\Gamma_{\phi,\omega}: \mathcal{S}_0^n\longrightarrow \mathcal{K}_0^n$ is continuous.

\begin{lemma}\label{continue1}
Let $\phi\in \mathcal C$ be an Orlicz function and $\omega$ is a weight function on $(0,1)$. If $K_i, K\in \mathcal{S}_0^n$, $i\geq 1$ and $K_i\to K$ in $\mathcal{S}_0^n$ then $\Gamma_{\phi,\omega}K_i\to \Gamma_{\phi,\omega}K$ in $\mathcal K_0^n$.
\end{lemma}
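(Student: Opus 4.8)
The plan is to reduce the continuity of $\Gamma_{\phi,\omega}$ to a pointwise (in $u\in S^{n-1}$) convergence statement together with a uniform bound, and then upgrade pointwise convergence of support functions to Hausdorff convergence. First I would record that since $K_i\to K$ in $\mathcal S_0^n$, we have $\rho_{K_i}\to\rho_K$ uniformly on $S^{n-1}$; in particular $r_{K_i}\to r_K$, $R_{K_i}\to R_K$, and $|K_i|\to|K|$ (the last by dominated convergence applied to $\rho_{K_i}^n$). Hence the constants $c(n,K_i)$ appearing in Lemma~\ref{boundsf} converge to $c(n,K)$, and $\int_0^{c(n,K_i)}\omega\to\int_0^{c(n,K)}\omega$ since $\omega$ is locally integrable. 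Combining this with Lemma~\ref{boundsf}, the support functions $h(\Gamma_{\phi,\omega}K_i,u)$ are bounded above and below by positive constants \emph{uniform in both $i$ and $u$}; by Blaschke's selection theorem it therefore suffices to show that every subsequential limit of $\{\Gamma_{\phi,\omega}K_i\}$ equals $\Gamma_{\phi,\omega}K$, i.e.\ to prove $h(\Gamma_{\phi,\omega}K_i,u)\to h(\Gamma_{\phi,\omega}K,u)$ for each fixed $u$, and in fact uniformly in $u$.

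Fix $u\in S^{n-1}$ and write $\lambda_i=h(\Gamma_{\phi,\omega}K_i,u)$, $\lambda=h(\Gamma_{\phi,\omega}K,u)$, so by Lemma~\ref{norm}
\[
\int_0^1\phi\!\left(\frac{f_{u,K_i}^{*}(t)}{\lambda_i}\right)\omega(t)\,dt=1=\int_0^1\phi\!\left(\frac{f_{u,K}^{*}(t)}{\lambda}\right)\omega(t)\,dt .
\]
By the uniform bounds just established, the $\lambda_i$ lie in a compact subset of $(0,\infty)$; passing to a subsequence assume $\lambda_i\to\lambda_*>0$. The key analytic step is to show $f_{u,K_i}^{*}(t)\to f_{u,K}^{*}(t)$ for (Lebesgue-a.e., which suffices) $t\in(0,1)$. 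For this I would show the distribution functions converge: $\mu^{K_i}_{f_{u,K_i}}(s)\to\mu^{K}_{f_{u,K}}(s)$ at every continuity point $s$ of the limit. Indeed $\mu^{K_i}_{f_{u,K_i}}(s)=|K_i|^{-1}\,|\{x\in K_i:|u\cdot x|>s\}|$, and since $K_i\to K$ in radial distance, the symmetric differences $K_i\triangle K$ have volume tending to $0$, so $|\{x\in K_i:|u\cdot x|>s\}|\to|\{x\in K:|u\cdot x|>s\}|$ (the set $\{x:|u\cdot x|=s\}$ being Lebesgue-null); together with $|K_i|\to|K|$ this gives the claim. Convergence of distribution functions at continuity points yields convergence of the decreasing rearrangements $f_{u,K_i}^{*}(t)\to f_{u,K}^{*}(t)$ at every continuity point of $f^*_{u,K}$, hence a.e. Since all these functions are bounded by $\sup_i R_{K_i}<\infty$ and $\phi$ is continuous with $\omega\in L^1(0,1)$, dominated convergence gives
\[
\int_0^1\phi\!\left(\frac{f_{u,K_i}^{*}(t)}{\lambda_i}\right)\omega(t)\,dt\;\longrightarrow\;\int_0^1\phi\!\left(\frac{f_{u,K}^{*}(t)}{\lambda_*}\right)\omega(t)\,dt ,
\]
using $\lambda_i\to\lambda_*$ and $\lambda_*$ bounded away from $0$ to control the argument. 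The left side is identically $1$, so the right side equals $1$; by the strict monotonicity of $\Phi$ (noted before Lemma~\ref{norm}) this forces $\lambda_*=\lambda$. Since every subsequence of $\{\lambda_i\}$ has a further subsequence converging to $\lambda$, we get $\lambda_i\to\lambda$.

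Finally I would note that the convergence $h(\Gamma_{\phi,\omega}K_i,u)\to h(\Gamma_{\phi,\omega}K,u)$ is in fact uniform in $u\in S^{n-1}$: the support functions $h(\Gamma_{\phi,\omega}K_i,\cdot)$ are $1$-Lipschitz on $S^{n-1}$ up to the uniform bound $\sup_i R_{K_i}/\phi^{-1}(1/\int_0^1\omega)$ (being support functions of bodies contained in a fixed ball), so the family is equicontinuous; pointwise convergence of an equicontinuous family on a compact set is uniform. Uniform convergence of support functions on $S^{n-1}$ is exactly convergence in the Hausdorff metric, i.e.\ $\Gamma_{\phi,\omega}K_i\to\Gamma_{\phi,\omega}K$ in $\mathcal K^n$; because the limit bodies contain a fixed ball (the uniform lower bound of Lemma~\ref{boundsf}), the convergence is in $\mathcal K_0^n$. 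The main obstacle is the middle step — justifying $f^{*}_{u,K_i}\to f^{*}_{u,K}$ a.e.\ from radial convergence of the bodies — which rests on the elementary but slightly technical facts that radial convergence implies $|K_i\triangle K|\to 0$ and that pointwise convergence of distribution functions at continuity points transfers to their generalized inverses.
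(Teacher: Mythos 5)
Your proposal is correct and follows essentially the same route as the paper: uniform positive bounds on $h(\Gamma_{\phi,\omega}K_i,u)$ from Lemma~\ref{boundsf}, a.e.\ convergence of $f_{u,K_i}^{*}$ to $f_{u,K}^{*}$ obtained by controlling the level sets through $|K_i\triangle K|\to 0$, dominated convergence, and identification of every subsequential limit of $\lambda_i$ via Lemma~\ref{norm}. The only difference is cosmetic: where you invoke the standard fact that convergence of distribution functions transfers to their generalized inverses at continuity points, the paper proves this directly by the $\liminf$/$\limsup$ argument at left-continuity points of $f_{u,K}^{*}$.
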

\begin{proof}
Since $\Gamma_{\phi,\omega}K_i, \Gamma_{\phi,\omega}K \in \mathcal K_0^n$, $i\geq 1$, it is enough to show that
\[
\lim_{i\to\infty} h(\Gamma_{\phi,\omega}K_i,u) = h(\Gamma_{\phi,\omega}K,u),
\]
for any $u\in S^{n-1}$. Fix a vector $u\in S^{n-1}$, denote
\[
\lambda_i=h(\Gamma_{\phi,\omega}K_i,u).
\]
Lemma~\ref{boundsf} says that
\[
\frac{1}{r_{K_i}f_{u,B}^{*}(1/2)\phi^{-1}\left(\frac{1}{\int_0^{c(n,K_i)}\omega(t)dt}\right)}\leq \lambda_i\leq \frac{R_{K_i}}{\phi^{-1}\left(\frac{1}{\int_0^1\omega(t)dt}\right)},\qquad\forall\, i\geq 1.
\]
Since $K_i\to K\in \mathcal{S}_o^n$ then we have 
\[
|K_i|\to |K|, \qquad r_{K_i}\to r_K > 0,\qquad {\rm and}\qquad R_{K_i}\to R_K < \infty.
\]
Thus, we get $c(n,K_i)\to c(n,K) >0 $. Hence there exists positive constants $a,b$ such that 
\begin{equation}\label{eq:boundlambdai}
a\leq \lambda_i\leq b,\qquad \forall\, i\geq 1. 
\end{equation}

We next show that 
\begin{equation}\label{eq:convergence}
f_{u,K_i}^{*}(t)\to f_{u,K}^{*}(t)\qquad \text{\rm for a.e}\,\, t\in (0,1).
\end{equation}
For $s>0$, denote
\[
A_i(s)=\{x\in K_i: |u.x|>s\}\qquad\text{and}\qquad A(s)=\{x\in K: |u.x|>s\}.
\]
Let $A\Delta B$ denote the symmetric difference of two measurable subsets $A,B\subset \mathbb R^n$, i.e., $A\Delta B = (A\setminus B) \cup (B\setminus A)$. We claim that   
\begin{equation}\label{eq:claim}
A_i(s)\Delta A(s)\subset K_i\Delta K,\qquad\forall\, i\geq 1,
\end{equation}
Indeed, if $x \in A_i(s) \setminus A(s)$, we must have $x\in K_i$ and $|u\cdot x| > s$. This implies that $x\not \in K$ since if $x\in K$ then $x\in A(s)$ which is a contradiction. Hence $x\in K_i\setminus K$. Similarly, if $x\in A(s)\setminus A_i(s)$ then $x\in K\setminus K_i$. Our claim \eqref{eq:claim} is proved.

By our assumption $K_i\to K$ in $\mathcal S_0^n$, we have $|K_i \Delta K|\to 0$. This fact and our claim \eqref{eq:claim} yield 
\begin{equation}\label{eq:limit0}
\lim_{i\to\infty} |A_i(s)\Delta A(s)| = 0.
\end{equation}

Let $t\in (0,1)$ be an arbitrary point. For any $s < f_{u,K}^*(t)$, we must have $\mu^K_{f_{u,K}}(s) > t$. It is obvious that $A(s) \subset A_i(s) \cup (A(s)\setminus A_i(s))$, then
\[
|A(s)|\leq |A_i(s)| + |A(s) \setminus A_i(s)|.
\]
Combining this inequality and \eqref{eq:limit0} proves that
\[
\liminf_{i\to\infty}\mu^{K_i}_{f_{u,K_i}}(s) = \liminf_{i\to\infty} \frac{|A_i(s)|}{|K_i|} \geq \frac{|A(s)|}{|K|} = \mu^K_{f_{u,K}}(s) > t.
\]
Therefore, there exists $i_0$ such that 
\[
\mu_{f_{u,K_i}}^{K_i}(s) > t,\qquad\forall\, i\geq i_0.
\]
This implies 
\[
f_{u,K_i}^*(t) \geq s,\qquad\forall\, i\geq i_0,
\]
and hence
\[
\liminf_{i\to \infty} f_{u,K_i}^*(t) \geq s.
\]
Since $s < f_{u,K}^*(t)$ is arbitrary, then 
\begin{equation}\label{eq:liminf}
\liminf_{i\to \infty} f_{u,K_i}^*(t) \geq f_{u,K}^*(t).
\end{equation}

Let $t\in (0,1)$ be a left continuous point of $f_{u,K}^*$. For any $s > f_{u,K}^*(t)$, there exists $t'\in (0,t)$ such that $f_{u,K}^*(t') < s$ by the left continuity of $f_{u,K}^*$ at $t$. Consequently, by the definition of $f_{u,K}^*$, we have
\[
\mu_{f_{u,K}}(s) \leq t' < t.
\]
Note that $A_i(s)\subset A(s)\cup (A_i(s)\setminus A(s))$, hence
\[
|A_i(s)|\leq |A(s)| + |A_i(s)\setminus A(s)|.
\]
Combining this inequality and \eqref{eq:limit0} proves that
\[
\limsup_{i\to\infty} \mu^{K_i}_{f_{u,K_i}}(s) = \limsup_{i\to\infty} \frac{|A_i(s)|}{|K_i|} \leq \frac{|A(s)|}{|K|} = \mu_{f_{u,K}}(s) < t,
\] 
here we use \eqref{eq:limit0}. Thus, there exists $i_1$ such that
\[
\mu^{K_i}_{f_{u,K_i}}(s) < t, \qquad\forall \, i\geq i_1.
\]
By definition of $f_{u,K_i}^*$, we obtain
\[
f_{u,K_i}^*(t) \leq s,\qquad\forall\, i\geq i_1,
\]
which implies
\[
\limsup_{i\to\infty}f_{u,K_i}^*(t) \leq s.
\]
Since $s > f_{u,K}^*(t)$ is arbitrary, then 
\begin{equation}\label{eq:limsup}
\limsup_{i\to\infty}f_{u,K_i}^*(t) \leq f_{u,K}^*(t).
\end{equation}

\eqref{eq:liminf} and \eqref{eq:limsup} show that $f_{u,K_i}^{*}(t)\to f_{u,K}^{*}(t)$ for any left continuous point $t$ of $f_{u,K}^{*}$. This proves \eqref{eq:convergence} since $f_{u,K}^{*}$  is left continuous a.e in $(0,1)$ because of the nonincreasing monotonicity.

Let $\{\lambda_{i_k}\}_k$ be an arbitrary subsequence of $\{\lambda_i\}_i$. Since $\{\lambda_{i_k}\}_k$ is bounded (by $a$ and $b$, see \eqref{eq:boundlambdai}), it possesses a subsequence (still denoted by $\{\lambda_{i_k}\}_k$) converging to $\lambda_0$. Obviously, we have $a\leq \lambda_0\leq b$. We have from definition of $h(\Gamma_{\phi,\omega}K_{i_k}, u)$ that
\[
\int_0^1 \phi\left(\frac{f_{u,K_{i_k}}^*(t)}{\lambda_{i_k}}\right) \omega(t) dt =1,
\]
for any $k\geq 1$. Since $R_0 = \sup\{R_i\, :\, i\geq 1\} < \infty$, hence
\[
\phi\left(\frac{f_{u,K_{i_k}}^*(t)}{\lambda_{i_k}}\right) \omega(t) \leq \phi\left(\frac{R_0}{a}\right)\omega(t) \in L_1((0,1)).
\]
Letting $k\to\infty$ and using the the dominated convergent theorem and \eqref{eq:convergence}, we get 
\[
\int_0^1\phi\left(\frac{f_{u,K}^{*}(t)}{\lambda_0}\right)\omega(t)dt=1,
\]
or $\lambda_0 = h(\Gamma_{\phi,\omega}K,u)$ by Lemma \ref{norm}. Since $\{\lambda_{i_k}\}_k$ is an arbitrary subsequence of $\{\lambda_i\}_i$, then we have
\[
\lim_{i\to\infty} h(\Gamma_{\phi,\omega}K_i,u) = h(\Gamma_{\phi,\omega}K, u).
\]
This Lemma is completely proved.
\end{proof}
We next show that the Orlicz-Lorentz centroid operator is continous in $\phi$. Recall that $\phi_i\to \phi\in \mathcal{C}$ if $\phi_i$ uniformly converges to $\phi$ on any compact interval of $[0,\infty)$.
\begin{lemma}\label{continue2}
If $\phi_i\to \phi\in \mathcal{C}$, then $\Gamma_{\phi_i,\omega}K\to \Gamma_{\phi,\omega}K$ for any $K\in \mathcal{S}_0^n$ and the weight function $\omega$ on $(0,1)$ .
\end{lemma}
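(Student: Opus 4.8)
The plan is to repeat, almost verbatim, the scheme used for Lemma~\ref{continue1}, now perturbing the Orlicz function instead of the body. Since all the bodies $\Gamma_{\phi_i,\omega}K$ and $\Gamma_{\phi,\omega}K$ lie in $\mathcal{K}_0^n$, it is enough to show that $h(\Gamma_{\phi_i,\omega}K,u)\to h(\Gamma_{\phi,\omega}K,u)$ for each fixed $u\in S^{n-1}$. Fix $u$, put $\lambda_i=h(\Gamma_{\phi_i,\omega}K,u)$, and note that here $f_{u,K}^*$ is a fixed function, so in the identity of Lemma~\ref{norm} the index $i$ enters only through $\phi_i$.

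\textbf{Uniform bounds.} Applying Lemma~\ref{boundsf} with the Orlicz function $\phi_i$ gives
\[
\frac{1}{r_K f_{u,B}^*(1/2)\,\phi_i^{-1}\big(1/\int_0^{c(n,K)}\omega(t)\,dt\big)}\le \lambda_i\le \frac{R_K}{\phi_i^{-1}\big(1/\int_0^1\omega(t)\,dt\big)},
\]
where $R_K$, $r_K$, $c(n,K)$ and the two arguments of $\phi_i^{-1}$ are all independent of $i$ and strictly positive (using $\int_0^1\omega<\infty$ and $\omega>0$). Thus it suffices to prove that $\phi_i^{-1}(y)\to\phi^{-1}(y)\in(0,\infty)$ for each fixed $y>0$; this is where the uniform-on-compacts convergence $\phi_i\to\phi$ is used, together with the strict monotonicity of $\phi$: writing $x=\phi^{-1}(y)>0$ and choosing a small $\varepsilon>0$, we have $\phi(x-\varepsilon)<y<\phi(x+\varepsilon)$, hence $\phi_i(x-\varepsilon)<y<\phi_i(x+\varepsilon)$ for $i$ large, hence $x-\varepsilon<\phi_i^{-1}(y)<x+\varepsilon$. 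Consequently the two outer expressions converge to finite positive limits, so there are constants $0<a\le b<\infty$ with $a\le\lambda_i\le b$ for all $i$ (the finitely many small indices contribute finite positive values, again by Lemma~\ref{boundsf} applied to each $\phi_i$ separately).

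\textbf{Passage to the limit.} Let $\{\lambda_{i_k}\}_k$ be an arbitrary subsequence of $\{\lambda_i\}_i$; being bounded it has a subsequence, still denoted $\{\lambda_{i_k}\}_k$, with $\lambda_{i_k}\to\lambda_0\in[a,b]$. By Lemma~\ref{norm},
\[
\int_0^1\phi_{i_k}\!\left(\frac{f_{u,K}^*(t)}{\lambda_{i_k}}\right)\omega(t)\,dt=1,\qquad k\ge 1.
\]
For a.e.\ $t\in(0,1)$ the numbers $f_{u,K}^*(t)/\lambda_{i_k}$ lie in the fixed compact interval $[0,R_K/a]$ (recall $f_{u,K}^*(t)\le R_K$) and converge to $f_{u,K}^*(t)/\lambda_0$; since $\phi_{i_k}\to\phi$ uniformly on $[0,R_K/a]$ and $\phi$ is continuous, the elementary bound $|\phi_{i_k}(s_k)-\phi(s)|\le\|\phi_{i_k}-\phi\|_{[0,R_K/a]}+|\phi(s_k)-\phi(s)|$ shows $\phi_{i_k}(f_{u,K}^*(t)/\lambda_{i_k})\to\phi(f_{u,K}^*(t)/\lambda_0)$. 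Moreover $\phi_{i_k}(f_{u,K}^*(t)/\lambda_{i_k})\,\omega(t)\le C\,\omega(t)$ with $C=\sup_k\max_{[0,R_K/a]}\phi_{i_k}<\infty$ (finite, again by uniform convergence on $[0,R_K/a]$), and $\omega\in L_1(0,1)$, so the dominated convergence theorem yields $\int_0^1\phi(f_{u,K}^*(t)/\lambda_0)\,\omega(t)\,dt=1$, i.e.\ $\lambda_0=h(\Gamma_{\phi,\omega}K,u)$ by Lemma~\ref{norm}. Since every subsequence of $\{\lambda_i\}$ has a further subsequence tending to the same limit $h(\Gamma_{\phi,\omega}K,u)$, the whole sequence converges to it, which proves the lemma.

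The only genuinely new point compared with Lemma~\ref{continue1} is the passage from $\phi_i\to\phi$ (uniform on compacts) to $\phi_i^{-1}\to\phi^{-1}$ pointwise, which is what secures the uniform bounds $a\le\lambda_i\le b$; once those are in hand, the rest is the same dominated-convergence routine, with the uniform convergence of $\phi_i$ on the fixed interval $[0,R_K/a]$ providing both the pointwise limit of the integrands and a common integrable majorant.
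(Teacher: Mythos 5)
Your proposal is correct and follows essentially the same route as the paper: uniform bounds on $\lambda_i$ from Lemma~\ref{boundsf} via the pointwise convergence $\phi_i^{-1}(y)\to\phi^{-1}(y)$, followed by the subsequence/dominated-convergence argument through Lemma~\ref{norm}. Your direct $\varepsilon$-argument for $\phi_i^{-1}\to\phi^{-1}$ (the paper uses a bounded-subsequence argument instead) and your explicit justification of the pointwise convergence of the integrands are only minor presentational variants of the paper's proof.
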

\begin{proof}
Suppose that $K\in \mathcal{S}_0^n$, it is enough to prove that 
\begin{equation}\label{eq:enough}
h(\Gamma_{\phi_i,\omega}K,u)\to h(\Gamma_{\phi,\omega}K,u)
\end{equation}
for any $u\in S^{n-1}$. Denote
\[
\lambda_i=h(\Gamma_{\phi_i,\omega}K,u),\qquad \forall\, i\geq 1.
\]
It implies from Lemma~\ref{boundsf} that
\begin{equation}\label{eq:abcd}
\frac{1}{r_{K}f_{u,B}^{*}(1/2)\phi_i^{-1}\left(\frac{1}{\int_0^{c(n,K)}\omega(t)dt}\right)}\leq \lambda_i\leq \frac{R_{K}}{\phi_i^{-1}\left(\frac{1}{\int_0^1\omega(t)dt}\right)},\qquad\forall\, i\geq 1.
\end{equation}

We first prove that 
\begin{equation}\label{lim}
\phi_i^{-1}(a)\to \phi^{-1}(a)
\end{equation}
for all $a>0$. Indeed, we can choose $M,m >0$ such that
\[
0< 2\phi(m) < a < \frac12 \phi(M)< \infty.
\]
Since $\phi_i\to \phi$ in $\mathcal C$ then there exists $i_0$ such that
\[
\phi_i(m) < a < \phi_i(M),\qquad\forall\, i\geq i_0,
\]
or equivalently
\[
m < \phi_i^{-1}(a) < M,\qquad\forall\, i\geq i_0.
\]
Hence $\{\phi_i^{-1}(a)\}_i$ is bounded. Suppose that $\{\phi_{i_k}^{-1}(a)\}_k$ is a subsequence of $\{\phi_i^{-1}(a)\}$ and converges to $b$. Obviously, we have $m\leq b\leq M$. Since $\phi_i\to\phi$ in $\mathcal C$, then
\[
a = \lim_{k\to\infty} \phi_{i_k}(\phi_{i_k}^{-1}(a)) = \phi(b),
\]
or $b=\phi^{-1}(a)$. We have shown that $\{\phi_i^{-1}(a)\}_i$ has at most one accumulation point which is $\phi^{-1}(a)$ if it exists. Since $\{\phi_i^{-1}(a)\}_i$ is bounded, then \eqref{lim} holds.

Combining \eqref{eq:abcd} and \eqref{lim} implies the existence of  $a,b>0$ such that $a < \lambda_i < b$ for any $i\geq 1$. Suppose that $\{\lambda_{i_k}\}_k$ is a subsequence of $\{\lambda_i\}_i$ and converges to $\lambda_0$. From the definition of $h(\Gamma_{\phi_i,\omega}K,u)$ we have
\[
\int_0^1 \phi_{i_k}\left(\frac{f_{u,K}^*(t)}{\lambda_{i_k}}\right) \omega(t) dt =1,
\]
for any $k\geq 1$. Moreover, it holds
\[
\phi_{i_k}\left(\frac{f_{u,K}^*(t)}{\lambda_{i_k}}\right) \omega(t) \leq \phi_{i_k}\left(\frac{R_K}{a}\right) \omega(t) \leq \left(\sup_{i\geq 1} \phi_i\left(\frac{R_K}{a}\right)\right) \omega(t) \in L_1((0,1)).
\]
Letting $k\to\infty$ and using the dominated convergence theorem and the assumption $\phi_i\to\phi$ in $\mathcal C$, we get
\[
\int_0^1 \phi\left(\frac{f_{u,K}^*(t)}{\lambda_0}\right) \omega(t) dt,
\]
or $\lambda_0 = h(\Gamma_{\phi,\omega}K,u)$ by Lemma \ref{norm}. We thus have shown that the sequence $\{\lambda_i\}_i$ has at most one accumulation point which is $h(\Gamma_{\phi,\omega}K,u)$ if it exists. Since $\{\lambda_i\}_i$ is bounded, then \eqref{eq:enough} holds as desired. This finishes our proof.
\end{proof}

\section{Proof of Theorem \ref{maintheorem}}
The next lemma plays crucial role in our proof of Theorem \ref{maintheorem}.

\begin{lemma}\label{main}
Let $\phi\in \mathcal C$ be an Orlicz function, $\omega$ is a weight function on $(0,1)$, and $K\in\mathcal{K}_0^n$. If $u\in S^{n-1}$ and $x'_1,x'_2\in u^{\bot}$, then
\begin{equation}\label{Steiner}
h\left(\Gamma_{\phi,\omega}(S_uK),\frac{1}{2}x'_1+\frac{1}{2}x'_2+u\right)\leq \frac{1}{2}h(\Gamma_{\phi,\omega}K,x'_1+u)+\frac{1}{2}h(\Gamma_{\phi,\omega}K,x'_2-u).
\end{equation}
Equality in \eqref{Steiner} implies that all of the chords of $K$ parallel to $u$, whose distance from the origin is less than $r_K/(2\max\{1,|x'_1|,|x'_2|\})$ have the midpoints that lie in the subspace
\[
\left\{y'+\frac{1}{2}(x'_2-x'_1).y'u\,:\, y'\in u^{\bot}\right\}
\]
of $\mathbb R^n$.
\end{lemma}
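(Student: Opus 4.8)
The plan is to follow the strategy used by Lutwak–Yang–Zhang for the Orlicz centroid body, but adapted to work with the decreasing rearrangement function. Fix $u\in S^{n-1}$ and $x'_1,x'_2\in u^\perp$, and set $\bar x=\frac12 x'_1+\frac12 x'_2+u$. Write $\lambda_1=h(\Gamma_{\phi,\omega}K,x'_1+u)$, $\lambda_2=h(\Gamma_{\phi,\omega}K,x'_2-u)$, and $\bar\lambda=\frac12\lambda_1+\frac12\lambda_2$. The goal is to show $\int_0^1\phi\bigl(f^*_{\bar x,S_uK}(t)/\bar\lambda\bigr)\omega(t)\,dt\le 1$, which by Lemma~\ref{norm} gives $h(\Gamma_{\phi,\omega}(S_uK),\bar x)\le\bar\lambda$. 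The crucial point is that $\Lambda_{\phi,\omega}$-norms depend only on the distribution function, so I would first seek a measure-preserving identification under which the linear functional on $S_uK$ is pointwise controlled by those on $K$.

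Concretely, I would use the maps $S,T:K\to\cdot$ from Lemma~\ref{Fub}. A point of $S_uK$ is $y'+tu$ with $y'\in K_u$, $|t|\le\sigma(y')$; its preimage under $S$ is $y'+(m(y')+t)u\in K$, and applying $T$ gives $y'+(m(y')-t)u\in K$. On $S_uK$ one has $f_{\bar x,S_uK}(y'+tu)=\frac12 x'_1\cdot y'+\frac12 x'_2\cdot y'+t$. Meanwhile, writing $z_1=y'+(m(y')+t)u$ and $z_2=y'+(m(y')-t)u$, we have $f_{x'_1+u,K}(z_1)=x'_1\cdot y'+m(y')+t$ and $f_{x'_2-u,K}(z_2)=x'_2\cdot y'-m(y')+t$, so that
\[
f_{\bar x,S_uK}(y'+tu)=\tfrac12 f_{x'_1+u,K}(z_1)+\tfrac12 f_{x'_2-u,K}(z_2).
\]
Now split $S_uK$ into the halves $t\ge 0$ and $t\le 0$ (each of measure $\frac12|K|$): on $t\ge0$ both $z_1$ and $z_2$ range over $K$ via $S$ and $T\circ S$ respectively; on $t\le0$ swap their roles. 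Using the convexity of $\phi$ together with the normalization $\int_0^1\phi(f^*_{x'_j+u,K}/\lambda_j)\omega=1$ (and the analogous identity for $x'_2-u$), and the fact that $\|\cdot\|_{\Lambda_{\phi,\omega}}$ is a norm on the measure space $(K,\mu^K)$, one obtains $\|f_{\bar x,S_uK}\|_{\Lambda_{\phi,\omega}}\le\frac12\|f_{x'_1+u,K}\|_{\Lambda_{\phi,\omega}}+\frac12\|f_{x'_2-u,K}\|_{\Lambda_{\phi,\omega}}=\bar\lambda$. The delicate step is passing from the pointwise convex-combination identity above to the rearrangement inequality: rather than invoking a rearrangement inequality directly, I would argue via the triangle inequality for $\|\cdot\|_{\Lambda_{\phi,\omega}}$ applied on $(S_uK,\mu^{S_uK})$ after transporting the two functions $z\mapsto f_{x'_1+u,K}(z)$ and $z\mapsto f_{x'_2-u,K}(z)$ to $S_uK$ through the measure-preserving correspondences, so their norms are unchanged.

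For the equality case, I would trace back where the inequalities are tight. Equality in $h(\Gamma_{\phi,\omega}(S_uK),\bar x)\le\bar\lambda$ forces equality in the triangle inequality for the Orlicz–Lorentz norm, which (since $\phi$ is strictly convex on the relevant range, being an Orlicz function with $\phi>0$ on $(0,\infty)$) forces the two transported functions to be proportional with the correct ratio $\lambda_1:\lambda_2$ where they are nonzero. Translating this back through the definitions: wherever $f_{x'_1+u,K}$ and $f_{x'_2-u,K}$ are both nonzero at the corresponding points, one gets $\lambda_2\bigl(x'_1\cdot y'+m(y')+t\bigr)=\lambda_1\bigl(x'_2\cdot y'-m(y')+t\bigr)$ for the relevant $(y',t)$; for this to hold across a full-measure family of chords one needs $\lambda_1=\lambda_2$ and hence $m(y')=\frac12(x'_2-x'_1)\cdot y'$. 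To localize to chords near the origin I would invoke Lemma~\ref{boundonKu}: for $y'\in(r_K/2)B\cap u^\perp$ the optimal $x'_j$ in the graph representation have $|x'_j|\le 2R_K/r_K$, which (combined with a rescaling argument replacing $x'_1,x'_2$ by $x'_j/\max\{1,|x'_1|,|x'_2|\}$ to keep the relevant region within the ball of radius $r_K/2$) yields the stated conclusion that the midpoints of chords within distance $r_K/(2\max\{1,|x'_1|,|x'_2|\})$ of the origin lie in the subspace $\{y'+\frac12(x'_2-x'_1)\cdot y'\,u:y'\in u^\perp\}$. I expect the main obstacle to be exactly this bookkeeping in the equality analysis: making rigorous the "full-measure family of chords" statement and the rescaling that confines everything to the region where Lemma~\ref{boundonKu} applies, while handling the sets where one of the two linear functionals vanishes (there the proportionality constraint is vacuous, so one must check these sets do not obstruct the midpoint conclusion near the origin).
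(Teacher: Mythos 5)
The inequality part of your argument is essentially the paper's: the identity $f_{x,S_uK}\circ S=\tfrac12 f_{x_1'+u,K}+\tfrac12 f_{x_2'-u,K}\circ T$ with the volume-preserving maps of Lemma~\ref{Fub} is exactly the starting point of the paper's proof (your splitting of $S_uK$ into the halves $t\ge 0$ and $t\le 0$ is unnecessary, since the correspondence $z_1=y'+(m(y')+t)u$, $z_2=y'+(m(y')-t)u$ works uniformly in $t$), and invoking the triangle inequality of $\|\cdot\|_{\Lambda_{\phi,\omega}}$ after transporting the two functions by equimeasurable maps is a legitimate packaging of what the paper does explicitly via $\phi(g^*)=(\phi(|g|))^*$, pointwise convexity, and the weighted rearrangement inequality \eqref{star5}. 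So up to \eqref{Steiner} your plan is fine.

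The equality case, however, has a genuine gap. First, strict convexity of $\phi$ is not available: an Orlicz function here is only assumed convex with $\phi(0)=0$ and $\phi>0$ on $(0,\infty)$; the case $\phi(t)=t$ (the Lorentz space $\Lambda_\omega$) is explicitly included, and for it equality in the triangle inequality forces only sign agreement, never proportionality, so your deduction ``proportional with ratio $\lambda_1:\lambda_2$, hence $\lambda_1=\lambda_2$'' breaks down (and indeed the conclusion of the lemma does not require $\lambda_1=\lambda_2$). Second, even if $\phi$ were strictly convex, extracting a pointwise proportionality statement from equality of Orlicz--Lorentz norms requires reopening the proof of the triangle inequality (the rearrangement step you deliberately black-boxed); the black box gives you no pointwise information. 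The mechanism that actually works, and that the paper uses, is weaker and more robust: equality in \eqref{Steiner} forces equality a.e. (hence, by continuity of the linear functionals, everywhere) in the pointwise inequality $|f_{x_1,K}+f_{x_2,K}\circ T|\le |f_{x_1,K}|+|f_{x_2,K}\circ T|$ --- this uses only the strict monotonicity of $\phi$, not strict convexity --- so on each chord the two affine functions $t\mapsto x_1'\cdot y'+m(y')+t$ and $t\mapsto x_2'\cdot y'-m(y')+t$ have the same sign for all $|t|\le\sigma(y')$. Then, for $|y'|\le r_K/(2\max\{1,|x_1'|,|x_2'|\})$, the inclusion $r_KB\subset K$ shows both affine functions have their (unique) root inside $(-\sigma(y'),\sigma(y'))$, and having the same sign on that interval forces the roots to coincide, giving $(x_2'-x_1')\cdot y'=2m(y')$; this also disposes of your worry about points where one functional vanishes. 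Finally, Lemma~\ref{boundonKu} and the proposed rescaling of $x_1',x_2'$ are out of place here: $x_1',x_2'$ are fixed in the statement, the localization radius comes directly from $r_KB\subset K$ as above, and Lemma~\ref{boundonKu} is only needed in the subsequent lemma on the inclusion $\Gamma_{\phi,\omega}(S_uK)\subset S_u(\Gamma_{\phi,\omega}K)$.
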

\begin{proof}
By the Lemma~\ref{commute} we can assume, without of loss generality, that $|K|=|S_uK|=1$. Let $K_u$ denote the image of the orthogonal projection of $K$ onto the subspace $u^{\bot}$. If $y'\in K_u$, define $\sigma(y')$ and $m(y')$ as in \eqref{eq:sigmaandm}. Denote
\[
x_1=x'_1+u,\quad x_2=x'_2-u,\quad x=\frac{1}{2}x'_1+\frac{1}{2}x'_2+u,\quad\text{and}\quad \lambda_i=h(\Gamma_{\phi,\omega}K,x_i),\quad i=1,2.
\]
From the Lemma~\ref{norm} we have
\[
\int_0^1\phi\left(\frac{f_{x_i,K}^{*}(t)}{\lambda_i}\right)\omega(t)dt=1,\quad i=1,2.
\]
If $y=y'+(m(y')+t)u\in K$ then 
\[
f_{x_1,K}(y)=x'_1.y'+m(y')+t,\quad f_{x_2,K}(Ty)=x'_2.y'-m(y')+t,\quad \text{and}\quad Sy=y'+tu,
\]
where $S,T$ are maps given in the Lemma~\ref{Fub}. Hence we have
\[
f_{x,S_uK}(Sy)=\frac12(x_1'\cdot y' + x_2'\cdot y') + t = \frac{1}{2}f_{x_1,K}(y)+\frac{1}{2}f_{x_2,K}(Ty)=:f(y).
\]
The volume preserving property of $S$ (by Lemma~\ref{Fub}) yields $f_{x,S_uK}^{*}=f^{*}$. Similarly, the volume preserving property of $T$ implies that $f_{x_2,K}\circ T$ and $f_{x_2,K}$ have the same decreasing rearrangement function. 

Denote $\lambda=(\lambda_1+\lambda_2)/2$. Since $\phi\in \mathcal C$, then  $\phi(g^{*})=(\phi(|g|))^{*}$ holds for any measurable function $g$. This identity implies 
\begin{equation}\label{star}
\phi\left(\frac{f_{x,S_uK}^{*}}{\lambda}\right)=\phi\left(\frac{(f_{x_1,K}+f_{x_2,K}\circ T)^{*}}{\lambda_1+\lambda_2}\right)=\left(\phi\left(\frac{|f_{x_1,K}+f_{x_2,K}\circ T|}{\lambda_1+\lambda_2}\right)\right)^{*}. 
\end{equation}
It is obvious that
\begin{equation}\label{star2}
\frac{|f_{x_1,K}+f_{x_2,K}\circ T|}{\lambda_1+\lambda_2}\leq \frac{\lambda_1}{\lambda_1+\lambda_2}\frac{|f_{x_1,K}|}{\lambda_1}+\frac{\lambda_2}{\lambda_1+\lambda_2}\frac{|f_{x_2,K}\circ T|}{\lambda_2}.
\end{equation}
The increasing monotonicity and convexity of $\phi$ together \eqref{star2} imply
\begin{equation}\label{star3}
\phi\left(\frac{|f_{x_1,K}+f_{x_2,K}\circ T|}{\lambda_1+\lambda_2}\right)\leq \frac{\lambda_1}{\lambda_1+\lambda_2}\phi\left(\frac{|f_{x_1,K}|}{\lambda_1}\right)+\frac{\lambda_2}{\lambda_1+\lambda_2}\phi\left(\frac{|f_{x_2,K}\circ T|}{\lambda_2}\right).
\end{equation}
The decreasing rearrangement preserves the order on the positive functions. This fact together \eqref{star} and \eqref{star3} prove that
\begin{equation}\label{star4}
\phi\left(\frac{f_{x,S_uK}^{*}}{\lambda}\right)\leq \left(\frac{\lambda_1}{\lambda_1+\lambda_2}\phi\left(\frac{|f_{x_1,K}|}{\lambda_1}\right)+\frac{\lambda_2}{\lambda_1+\lambda_2}\phi\left(\frac{|f_{x_2,K}\circ T|}{\lambda_2}\right)\right)^{*}.
\end{equation}
Multiplying both sides of \eqref{star4} by $\omega$, then integrating the obtained inequality on $(0,1)$ and using the known fact 
\begin{equation}\label{star5}
\int_0^1(g_1+g_2)^{*}(t)\omega(t)dt\leq \int_0^1g_1^{*}(t)\omega(t)dt+\int_0^1g_2^{*}(t)\omega(t)dt,
\end{equation}
and again the equality $\phi(g^{*})=(\phi(|g|))^{*}$, we obtain
\begin{align}\label{eq:1111}
\int_0^1&\phi\left(\frac{f_{x,S_uK}^{*}(t)}{\lambda}\right)\omega(t)dt\notag\\
&\leq\int_0^1\left(\frac{\lambda_1}{\lambda_1+\lambda_2}\phi\left(\frac{|f_{x_1,K}|}{\lambda_1}\right)+\frac{\lambda_2}{\lambda_1+\lambda_2}\phi\left(\frac{|f_{x_2,K}\circ T|}{\lambda_2}\right)\right)^*(t)\omega(t)dt\notag\\
&\leq\frac{\lambda_1}{\lambda_1+\lambda_2}\int_0^1\phi\left(\frac{f_{x_1,K}^{*}(t)}{\lambda_1}\right)\omega(t)dt+\frac{\lambda_2}{\lambda_1+\lambda_2}\int_0^1\phi\left(\frac{f_{x_2,K}^{*}(t)}{\lambda_2}\right)\omega(t)dt\notag\\
&=1,
\end{align}
here we used the property that $f_{x_2,K}$ and $f_{x_2,K}\circ T$ have the same decreasing rearrangement function. The latter inequality \eqref{eq:1111} and the definition of $h(\Gamma_{\phi,\omega}S_uK, \cdot)$ prove
\[
h(\Gamma_{\phi,\omega}(S_uK),x)\leq \lambda=\frac{1}{2}h(\Gamma_{\phi,\omega}K,x_1)+\frac{1}{2}h(\Gamma_{\phi,\omega}K,x_2),
\]
as our desired inequality \eqref{Steiner}.

Suppose that equality holds in \eqref{Steiner}. Thus we must have equalities in \eqref{star2}, \eqref{star3} for a.e  $y\in K$ and equality in \eqref{star5} for $f_{x_1,K}$ and $f_{x_2,K}\circ T$. The continuity of $f_{x_1,K}$ and $f_{x_2,K}\circ T$ on $K$ implies that \eqref{star2} holds on whole $K$. Hence, for any fixed $y'\in K_u$, the signs of $x'_1\cdot y'+m(y')+t$ and $x'_2\cdot y'-m(y')+t$ coincide for all $|t|\leq \sigma(y')$. 

For any $y'\in K_u$ and $|y'|\leq \frac{r_K}{2}$, we have
\[
y' \pm \frac{\sqrt{3}}2 r_K u\in r_K B \subset K,
\]
hence $g_u(y') \geq \sqrt{3}r_K/2> r_K/2$ and $-f_u(y') \leq -\sqrt{3}r_K/2 <-r_K/2$. Thus we have prove the following inclusion
\begin{equation}\label{include1}
\left(-\frac{r_K}{2},\frac{r_K}{2}\right)\subset (m(y')-\sigma(y'),m(y')+\sigma(y'))
\end{equation}
and hence 
\begin{equation}\label{include2}
\left(-\frac{r_K}{2},\frac{r_K}{2}\right)\subset (-m(y')-\sigma(y'),-m(y')+\sigma(y'))
\end{equation}

Suppose that $y'\in K_u$ and 
\[
|y'|\leq \frac{r_K}{2\max\{1,|x'_1|,|x'_2|\}}.
\]
The inclusions \eqref{include1} and \eqref{include2} imply 
\[
x_1'.y'+m(y')\in (-\sigma(y'),\sigma(y'))\quad \text{and}\quad x'_2.y'-m(y')\in (-\sigma(y'),\sigma(y')).
\]
Hence the affine functions
\[
t\mapsto x_1'.y'+m(y')+t \quad\text{\rm and}\quad t\mapsto x'_2.y'-m(y')+t
\]
both have their root in $(-\sigma(y'),\sigma(y'))$. However, we know that they have the same sign on this interval, then they must have a root at the same $t(y')\in (-\sigma(y'),\sigma(y'))$. This assertion yields 
\[
(x'_2-x'_1).y'=2m(y').
\]
Therefore, we have proved that for any $y'\in K_u$ with $|y'|\leq r_K/(2\max\{1,|x'_1|,|x'_2|\})$, the midpoints 
\[
\{y'+m(y')u: y'\in K_u\}
\]
of the chords of $K$ parallel to $u$ lie in the subspace
\[
\left\{y'+\frac{1}{2}(x'_2-x'_1).y'u: y'\in u^{\bot}\right\}
\]
of $\R^n$ as our desired.
\end{proof}

From the inequality \eqref{Steiner}, we deduce the following inequality
\[
h\left(\Gamma_{\phi,\omega}(S_uK),\frac{1}{2}x'_1+\frac{1}{2}x'_2-u\right)\leq \frac{1}{2}h(\Gamma_{\phi,\omega}K,x'_1+u)+\frac{1}{2}h(\Gamma_{\phi,\omega}K,x'_2-u).
\]

\begin{lemma}\label{main1}
Let $\phi\in \mathcal{C}$ be an Orlicz function, $\omega$ is a weight function on $(0,1)$ and $K\in \mathcal{K}_0^n$. If $u\in S^{n-1}$ then
\begin{equation}\label{include3}
\Gamma_{\phi,\omega}(S_uK)\subset S_u(\Gamma_{\phi,\omega}K).
\end{equation}
If the inclusion is an identity then all chords of $K$ parallel to $u$, whose distance from the origin is less than $r_Kr_{\Gamma_{\phi,\omega}K}/(4R_{\Gamma_{\phi,\omega}K})$ have the midpoints that lie in a subspace of $\R^n$.
\end{lemma}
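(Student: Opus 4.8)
The plan is to reduce \eqref{include3} to Lemma~\ref{main} through the overgraph/undergraph description. Put $L=\Gamma_{\phi,\omega}K$ and $M=\Gamma_{\phi,\omega}(S_uK)$, both in $\mathcal K_0^n$. As $S_uK$ is invariant under the reflection $R\in GL(n)$, $x'+tu\mapsto x'-tu$, Lemma~\ref{commute} gives $M=R\,M$, so $M$ is symmetric about $u^\perp$; hence its midpoint function vanishes and its overgraph and undergraph functions coincide with its half-chord function $\sigma_M$. Thus $M\subseteq S_uL$ will follow from the two facts $M_u\subseteq L_u$ and $\sigma_M\le\sigma_L$ on the common projection.

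For the first, fix $v\in u^\perp$. The volume preserving map $S:K\to S_uK$ of Lemma~\ref{Fub} satisfies $f_{v,S_uK}\circ S=f_{v,K}$, since the shift along $u$ hidden in $S$ is killed by $v\perp u$; hence $f_{v,S_uK}$ and $f_{v,K}$ have a common distribution function and $h(M,v)=\|f_{v,S_uK}\|_{\Lambda_{\phi,\omega}}=\|f_{v,K}\|_{\Lambda_{\phi,\omega}}=h(L,v)$. Since the support function of the orthogonal projection onto $u^\perp$ is the restriction of the support function to $u^\perp$, we get $M_u=L_u$. For the second, fix $y'\in\mathrm{relint}(L_u)$ and choose, via Lemma~\ref{graph}, $x'_1,x'_2\in u^\perp$ with $g_u^L(y')=h(L,x'_1+u)-x'_1\cdot y'$ and $f_u^L(y')=h(L,x'_2-u)-x'_2\cdot y'$. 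Testing the formula of Lemma~\ref{graph} for $M$ at the vector $\tfrac12 x'_1+\tfrac12 x'_2$ and applying Lemma~\ref{main},
\[
\sigma_M(y')=g_u^M(y')\le h\left(M,\tfrac12 x'_1+\tfrac12 x'_2+u\right)-\tfrac12(x'_1+x'_2)\cdot y'\le\tfrac12 g_u^L(y')+\tfrac12 f_u^L(y')=\sigma_L(y').
\]
Since $\sigma_M$ is concave, this inequality on $\mathrm{relint}(L_u)=\mathrm{relint}(M_u)$ gives $\mathrm{int}\,M\subseteq S_uL$, and taking closures proves $\Gamma_{\phi,\omega}(S_uK)\subseteq S_u(\Gamma_{\phi,\omega}K)$.

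Finally, suppose \eqref{include3} is an equality; then $\sigma_M=\sigma_L$ on $L_u$, so at the interior point $y'=0$ the displayed chain collapses to equalities. In particular equality holds in \eqref{Steiner} for the vectors $x'_1,x'_2$ realizing $g_u^L(0)$ and $f_u^L(0)$, and the equality clause of Lemma~\ref{main} applies to this pair. Lemma~\ref{boundonKu}, applied to $L$ at $y'=0\in(r_L/2)B\cap u^\perp$ with $r_L=r_{\Gamma_{\phi,\omega}K}$ and $R_L=R_{\Gamma_{\phi,\omega}K}$, gives $|x'_1|,|x'_2|\le 2R_L/r_L$; since $R_L\ge r_L$ this yields $\max\{1,|x'_1|,|x'_2|\}\le 2R_L/r_L$, hence
\[
\frac{r_K}{2\max\{1,|x'_1|,|x'_2|\}}\ge\frac{r_K r_L}{4R_L}=\frac{r_K\,r_{\Gamma_{\phi,\omega}K}}{4R_{\Gamma_{\phi,\omega}K}}.
\]
Therefore all chords of $K$ parallel to $u$ that come within distance $r_K\,r_{\Gamma_{\phi,\omega}K}/(4R_{\Gamma_{\phi,\omega}K})$ of the origin have midpoints in the fixed subspace $\{y'+\tfrac12(x'_2-x'_1)\cdot y'\,u:\ y'\in u^\perp\}$, which is the desired conclusion. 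The one delicate point is precisely this: the subspace produced by Lemma~\ref{main} depends on the auxiliary vectors $x'_1,x'_2$, so one must single out a fixed admissible base point (here $y'=0$) and control its auxiliary vectors, which is exactly what Lemma~\ref{boundonKu} supplies.
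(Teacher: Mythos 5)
Your proof is correct and follows essentially the same route as the paper: reduce the inclusion to Lemma~\ref{main} via the graph formulas of Lemma~\ref{graph}, then in the equality case fix an admissible $y'$ near the origin, bound the auxiliary vectors with Lemma~\ref{boundonKu}, and invoke the equality clause of Lemma~\ref{main} together with the estimate $r_K/(2\max\{1,|x'_1|,|x'_2|\})\geq r_Kr_{\Gamma_{\phi,\omega}K}/(4R_{\Gamma_{\phi,\omega}K})$. Your only deviations are cosmetic but pleasant: using Lemma~\ref{commute} to note that $\Gamma_{\phi,\omega}(S_uK)$ is symmetric about $u^\perp$ (so a single chord-function inequality suffices) and checking $M_u=L_u$ explicitly, whereas the paper treats the overgraph and undergraph separately via \eqref{Steiner} and its reflected version.
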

\begin{proof}
For any compact subset $L \subset \R^n$ and any unit vector $u\in S^{n-1}$, we denote by $L_u$ the orthogonal image of $L$ on $u^{\bot}$. For any $y'\in L_u$, we define
\[
g_u(L, y')=\sup\{t: y'+tu\in L\}
\]
and
\[
f_u(L,y')=-\inf\{t: y'+tu\in L\}=\sup\{-t: y'+tu\in L\}.
\]

Given $y'\in \text{\rm relint}(\Gamma_{\phi,\omega}K)_u$, by Lemma \ref{graph},  there exist $x'_1,x'_2\in u^{\bot}$ such that
\begin{equation}\label{supfunc}
g_u(\Gamma_{\phi,\omega}K,y')=h(\Gamma_{\phi,\omega}K,x'_1+u)-x'_1.y',
\end{equation}
and
\begin{equation}\label{inffunc}
f_u(\Gamma_{\phi,\omega}K,y')=h(\Gamma_{\phi,\omega}K,x'_2-u)-x'_2.y'.
\end{equation}
Combining \eqref{supfunc} and \eqref{inffunc} together Lemma~\ref{main} imply
\begin{align*}
g_u(S_u(\Gamma_{\phi,\omega}K),y')&=\frac{1}{2}(g_u(\Gamma_{\phi,\omega}K,y')+f_u(\Gamma_{\phi,\omega}K,y'))\\
&=\frac{1}{2}(h(\Gamma_{\phi,\omega}K,x'_1+u)+h(\Gamma_{\phi,\omega}K,x'_2-u))-\frac{1}{2}(x'_1+x'_2).y'\\
&\geq h\left(\Gamma_{\phi,\omega}(S_uK),\frac{1}{2}(x'_1+x'_2)+u\right)-\frac{1}{2}(x'_1+x'_2).y'\\
&\geq \min_{x'\in u^{\bot}}\{h(\Gamma_{\phi,\omega}(S_uK),x'+u)-x'.y'\}\\
&=g_u(\Gamma_{\phi,\omega}(S_uK),y'),
\end{align*}
and
\begin{align*}
f_u(S_u(\Gamma_{\phi,\omega}K),y')&=\frac{1}{2}(g_u(\Gamma_{\phi,\omega}K,y')+f_u(\Gamma_{\phi,\omega}K,y'))\\
&=\frac{1}{2}(h(\Gamma_{\phi,\omega}K,x'_1+u)+h(\Gamma_{\phi,\omega}K,x'_2-u))-\frac{1}{2}(x'_1+x'_2).y'\\
&\geq h\left(\Gamma_{\phi,\omega}(S_uK),\frac{1}{2}(x'_1+x'_2)-u\right)-\frac{1}{2}(x'_1+x'_2).y'\\
&\geq \min_{x'\in u^{\bot}}\{h(\Gamma_{\phi,\omega}(S_uK),x'-u)-x'.y'\}\\
&=f_u(\Gamma_{\phi,\omega}(S_uK),y').
\end{align*}
These two inequalities prove the inclusion \eqref{include3}.

Now suppose that the inclusion \eqref{include3} is an identity, then for any $y'\in (\Gamma_{\phi,\omega}K)_u$ we have
\begin{equation}\label{equa}
g_u(S_u(\Gamma_{\phi,\omega}K),y')=g_u(\Gamma_{\phi,\omega}(S_uK),y')\,\text{ and }\,  g_u(S_u(\Gamma_{\phi,\omega}K),y')=f_u(\Gamma_{\phi,\omega}(S_uK),y').
\end{equation}
For each $y'\in (\Gamma_{\phi,\omega}K)_u$ and $|y'|\leq r_{\Gamma_{\phi,\omega}K}/2$, by Lemma \ref{graph}, there exist $x'_1,x'_2\in u^{\bot}$ such that
\[
g_u(\Gamma_{\phi,\omega}K,y')=h(\Gamma_{\phi,\omega}K,x'_1+u)-x'_1.y',
\]
and
\[
f_u(\Gamma_{\phi,\omega}K,y')=h(\Gamma_{\phi,\omega}K,x'_2-u)-x'_2.y'.
\] 
Lemma \ref{boundonKu} implies 
\[
|x'_1|\leq \frac{2R_{\Gamma_{\phi,\omega}K}}{r_{\Gamma_{\phi,\omega}K}}\quad \text{and}\quad |x'_2|\leq \frac{2R_{\Gamma_{\phi,\omega}K}}{r_{\Gamma_{\phi,\omega}K}}.
\]
Equalities in \eqref{equa} deduce that
\[
h\left(\Gamma_{\phi,\omega}(S_uK),\frac{1}{2}(x'_1+x'_2)+u\right)=\frac{1}{2}h(\Gamma_{\phi,\omega}K,x'_1+u)+\frac{1}{2}h(\Gamma_{\phi,\omega}K,x'_2-u).
\]
By Lemma~\ref{main}, all the chords of $K$ parallel to $u$ whose distance from the origin is less than $r_K/(2\max\{1,|x'_1|,|x'_2|\})$ have midpoints that lie in a subspace of $\R^n$. However, the following estimate
\[
\frac{r_K}{2\max\{1,|x'_1|,|x'_2|\}}\geq \frac{r_Kr_{\Gamma_{\phi,\omega}K}}{4R_{\Gamma_{\phi,\omega}K}},
\]
holds, which then proves the conlusion of this Lemma.
\end{proof}

An immediate consequence of Lemma \ref{supfunc} and Lemma \ref{ellipsoid} reads as follows.

\begin{corollary}\label{cor}
Let $\phi\in \mathcal{C}$ be an Orlicz function, $\omega$ is a weight function on $(0,1)$ and $K\in \mathcal{K}_0^n$. If $u\in S^{n-1}$ then
$$\Gamma_{\phi,\omega}(S_uK)\subset S_u(\Gamma_{\phi,\omega}K).$$
If the inclusion is an identity for any $u\in S^{n-1}$, then $K$ is an ellipsoid centered at the origin.
\end{corollary}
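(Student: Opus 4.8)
The plan is to read the inclusion off directly from Lemma~\ref{main1} and then to upgrade its equality characterization, which is stated direction by direction, to the uniform hypothesis required by Lemma~\ref{ellipsoid}. So the corollary is really just a repackaging of Lemma~\ref{main1} and the Gruber--Ludwig characterization.

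First, the inclusion $\Gamma_{\phi,\omega}(S_uK)\subset S_u(\Gamma_{\phi,\omega}K)$ for every $u\in S^{n-1}$ is precisely the first assertion of Lemma~\ref{main1}, so nothing new is needed for that part. For the rigidity part, I would assume the inclusion is an identity for \emph{every} $u\in S^{n-1}$. Fixing $u$, the equality clause of Lemma~\ref{main1} tells us that all chords of $K$ parallel to $u$ whose distance from the origin is less than
\[
\frac{r_K\,r_{\Gamma_{\phi,\omega}K}}{4R_{\Gamma_{\phi,\omega}K}}
\]
have their midpoints in some subspace of $\mathbb R^n$.

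The one point to state carefully is that this threshold is independent of $u$. Since $K\in\mathcal{K}_0^n$ we have $0<r_K\le R_K<\infty$, and since $\Gamma_{\phi,\omega}K\in\mathcal{K}_0^n$ (established in \S3) we likewise have $0<r_{\Gamma_{\phi,\omega}K}\le R_{\Gamma_{\phi,\omega}K}<\infty$; these four radii are attached to the bodies themselves, not to any particular direction. Hence
\[
\epsilon_K:=\frac{r_K\,r_{\Gamma_{\phi,\omega}K}}{4R_{\Gamma_{\phi,\omega}K}}>0
\]
is a fixed positive number, and for \emph{every} direction $u$ the chords of $K$ parallel to $u$ that come within distance $\epsilon_K$ of the origin have midpoints lying in a subspace of $\mathbb R^n$. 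Applying Lemma~\ref{ellipsoid} with this choice of $\epsilon_K$ then yields that $K$ is an origin--centered ellipsoid, completing the proof.

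I do not expect a genuine obstacle here; the only substantive check is the uniformity of $\epsilon_K$ in $u$, which is immediate from the remarks above, and the rest is a direct citation of Lemma~\ref{main1} and Lemma~\ref{ellipsoid}.
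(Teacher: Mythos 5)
Your proposal is correct and follows exactly the route the paper intends: the paper states the corollary as an immediate consequence of Lemma~\ref{main1} and Lemma~\ref{ellipsoid}, and your only added detail --- that the threshold $r_K r_{\Gamma_{\phi,\omega}K}/(4R_{\Gamma_{\phi,\omega}K})$ is independent of $u$ and so serves as the uniform $\epsilon_K$ required by the Gruber--Ludwig characterization --- is precisely the point left implicit there.
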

With Corollary \ref{cor} in hand, we now ready prove our main theorem (i.e., Theorem \ref{maintheorem}) by using Steiner's symmetrization method.

\begin{proof}[Proof of Theorem \ref{maintheorem}]
Let $K\in \mathcal K_0^n$. By using Blaschke's selection principle, we can take a sequence of unit vectors $\{u_i\}_i \subset S^{n-1}$ such that the sequence of convex bodies defined by
\[
K_0:= K,\quad K_i=S_{u_i}K_{i-1},\qquad i\geq 1
\]
converges to $(|K|/\omega_n)^{1/n}B$ in $\mathcal K_0^n$. Thank to Lemma \ref{commute} and Lemma \ref{continue1}, we have
\begin{equation}\label{eq:limit}
\lim_{i\to\infty} \Gamma_{\phi,\omega} K_i = \left(\frac{|K|}{\omega_n}\right)^{\frac1n} \Gamma_{\phi,\omega}B \quad\text{in} \quad \mathcal K_0^n.
\end{equation}
The volume preserving property of Steiner's symmetrization and Corollary \ref{cor} imply that
\[
|\Gamma_{\phi,\omega}K_i|\leq |\Gamma_{\phi,\omega}K_{i-1}|\leq \cdots \leq |\Gamma_{\phi,\omega}K|,\qquad\forall\, i\geq 1.
\]
Letting $i\to \infty$ and using \eqref{eq:limit}, we get
\[
\frac{|K|}{\omega_n} |\Gamma_{\phi,\omega} B| = \lim_{i\to \infty} |\Gamma_{\phi,\omega}K_i| \leq |\Gamma_{\phi,\omega}K|,
\]
which then proves that the volume ratio $|\Gamma_{\phi,\omega}K|/|K|$ is minimized at $B$ in $\mathcal K_0^n$. Using again Lemma~\ref{commute}, this volume ratio is minimized at all origin--centered ellipsoids.

Suppose that the volume ratio $|\Gamma_{\phi,\omega}K|/|K|$ is minimized at $K$. By Corollary \ref{cor}, we have
\[
\Gamma_{\phi,\omega}(S_uK)\subset S_u(\Gamma_{\phi,\omega}K),\qquad\forall\, u\in S^{n-1},
\] 
which proves $|\Gamma_{\phi,\omega}(S_uK)|\leq |\Gamma_{\phi,\omega}K|$ for any $u\in S^{n-1}$. Since $|S_uK|=|K|$ and $|\Gamma_{\phi,\omega}K|/|K|$ is minimized at $K$, it holds 
\[
\frac{|\Gamma_{\phi,\omega}K|}{|K|} \leq \frac{|\Gamma_{\phi,\omega}(S_uK)|}{|S_uK|} = \frac{|\Gamma_{\phi,\omega}(S_uK)|}{|K|} \leq \frac{|\Gamma_{\phi,\omega}K|}{|K|}
\]
which forces $\Gamma_{\phi,\omega}(S_uK)= S_u(\Gamma_{\phi,\omega}K)$ for any $u\in S^{n-1}$. Thank to Corollary \ref{cor}, we conclude that $K$ is an origin--centered ellipsoid.
\end{proof}

Despite Theorem \ref{maintheorem} states only for convex bodies in $\mathcal K_0^n$, we hope that it could hold for any star bodies in $\mathcal S_0^n$. The $L_p$ Busemann--Petty centroid inequality on star bodies in $\mathcal S_0^n$ is reduced from the one on convex bodies in $\mathcal K_0^n$ by using a special class--reduction argument (see \cite{LYZ00}). We do not know the existence of this argument in proving of the Orlicz Busemann--Petty centroid inequality until now as mentioned in \cite{LYZcentroid}. Recently, Zhu \cite{Zhu12} established the Orlicz Busemann--Petty centroid inequality for all star bodies in $\mathcal S_0^n$ by extending the method of Lutwak, Yang and Zhang in \cite{LYZcentroid}. We believe that Zhu's proof could be used to extend Theorem \ref{maintheorem} to all star bodies in $\mathcal S_0^n$.

\section*{Acknowledgments}
This work was initiated when I was PhD student at Institut de Math\'ematiques de Jussieu, UPMC. I would like to thank my advisor, professor Dario Cordero--Erausquin for encouraging me to write this paper and also for his useful discussions. This work is supported by the CIMI's postdoctoral research fellowship.

\end{document}